\documentclass{article}
\usepackage{color}
\usepackage{amsmath,amssymb,amsthm}
\usepackage{hyperref}
\usepackage{aliascnt}
\newtheorem{theorem}{Theorem}[section]

\newaliascnt{lemma}{theorem}
\newtheorem{lemma}[lemma]{Lemma}
\aliascntresetthe{lemma}

\newaliascnt{proposition}{theorem}
\newtheorem{proposition}[proposition]{Proposition}
\aliascntresetthe{proposition}

\newaliascnt{corollary}{theorem}
\newtheorem{corollary}[corollary]{Corollary}
\aliascntresetthe{corollary}

\newaliascnt{definition}{theorem}
\newtheorem{definition}[definition]{Definition}
\aliascntresetthe{definition}

\newaliascnt{example}{theorem}
\newtheorem{example}[example]{Example}
\aliascntresetthe{example}

\newaliascnt{remark}{theorem}
\newtheorem{remark}[remark]{Remark}
\aliascntresetthe{remark}

\usepackage{arxiv}

\usepackage[utf8]{inputenc} % allow utf-8 input
\usepackage[T1]{fontenc}    % use 8-bit T1 fonts
\usepackage{hyperref}       % hyperlinks
\usepackage{url}            % simple URL typesetting
\usepackage{booktabs}       % professional-quality tables
\usepackage{nicefrac}       % compact symbols for 1/2, etc.
\usepackage{microtype}      % microtypography
\usepackage{lipsum}
\usepackage{graphicx}
\graphicspath{ {./images/} }
\usepackage{graphicx}
\usepackage{epstopdf}
\usepackage{algorithmic}

\title{From Nonsmooth Minima to Smooth Branches via Heat Kernel Regularization}

\author{
 Hyeontae Jo \\
  Department of Mathematics, Ajou University, Yeongtong-gu, Suwon, 16499, Republic of Korea \\
  Biomedical Mathematics Group, Pioneer Research Center for Mathematical and Computational Sciences,\\
  Institute for Basic Science, Yuseong-gu, Daejeon, 34126, Republic of Korea\\
  \texttt{ajouhtj@ajou.ac.kr} \\
}

\begin{document}
\maketitle
\begin{abstract}
Many optimization problems in science and engineering involve objective functions that are nonsmooth at their minimizers. A common strategy is to trace a branch of minimizers of a regularized objective as the smoothing scale tends to zero; however, for nonsmooth functions, it is generally unclear whether such a branch can be continued and whether the associated continuation equation remains locally solvable. We study heat-kernel regularization and the resulting continuation equation along a local minimizing branch connected to a minimizer of the original objective. Under a global growth condition and a local leading-order description of the form $|x|^a$ with $1 \le a \le 2$, we first show that the regularized objective admits global minimizers and that any such minimizing branch localizes at the natural heat scale $O(\sqrt{t})$. We then prove that the asymptotic behavior of the regularized Hessian is determined by the local profile of the original objective: it remains uniformly positive definite in the quadratic case $a=2$, while in the subquadratic regime $1 \le a < 2$ its smallest eigenvalue grows at the controlled rate $t^{(a-2)/2}$. Consequently, the regularized Hessian remains asymptotically nondegenerate for all sufficiently small $t>0$, and the continuation equation remains locally solvable, even when the original objective does not admit a classical Hessian at the minimizer. Our results provide a rigorous second-order framework for continuation-based analysis in nonsmooth optimization by showing how heat regularization restores nondegeneracy near singular minimizers.
\end{abstract}

\section{Introduction}
Finding the minimizers of an objective function $f:\mathbb{R}^n\to\mathbb{R}^{\ge0}$ is a cornerstone issue not only in optimization theory but also in modern deep learning. In many applications, the objective $f$ is nonsmooth or exhibits singular geometries near its minimizers~\cite{fuduli2004minimizing,nesterov2005smooth,kiwiel2007convergence,ma2022beyond,jia2025first, jo2026neural}. This lack of classical differentiability renders standard gradient-based analyses and second-order methods inapplicable, posing significant challenges for both theory and practice~\cite{garmanjani2013smoothing, wang2022differentially}. A natural remedy is to approximate $f$ using heat-kernel regularization. For $t>0$ we define the smoothed objective as the convolution:
\begin{equation}\label{eq:def-Pt}
(P_t f)(x):=\int_{\mathbb{R}^n} G_t(x-y)\,f(y)\,dy,
\end{equation}
where $G_t(y):=\frac{1}{(4\pi t)^{n/2}}e^{-\frac{\|y\|_2^2}{4t}}$ denotes the heat kernel and $\|y\|_2:=\Bigl(\sum_{i=1}^n y_i^2\Bigr)^{1/2}$ denotes the Euclidean norm of $y=(y_1,\dots,y_n)\in\mathbb R^n$. Then the family \(\{P_t f\}_{t>0}\) provides a smooth approximation of \(f\), and the regularization parameter $t$ acts as a homotopy variable, connecting a highly regularized landscape ($t>0$) to the original nonsmooth objective, in the sense that $\lim_{t\to0^+} P_t f(x)=f(x)$ at every continuity point of $f$ (See Chapter 3 in \cite{stein2009real} or Chapter 9 in \cite{brezis2011functional}). 

Variants of this smoothing technique have been broadly applied across diverse fields, including imaging, geometric data analysis, inverse problems, and kernel-based optimization~\cite{seo2010heat, lindeberg2013scale, yang2014mollification, nesterov2017random, chung2019heat}. In practice, these applications frequently rely on a single-scale approximation—fixing a specific, sufficiently small $t > 0$—to simplify the optimization landscape. However, such a static approach inherently introduces a smoothing bias, as the minimizer of $P_t f$ does not necessarily coincide with that of the original objective $f$. To eliminate this bias and achieve a more refined analysis, one can instead track a branch of critical points $x_t$ satisfying $\nabla_x(P_t f)(x_t) = 0$ as $t$ converges to $0^+$~\cite{iwakiri2022single, starnes2023gaussian, xu2025global}. Specifically, the implicit function theorem yields a local critical-point branch \cite{allgower1993continuation,hao2022adaptive,seguin2022continuation}. Differentiating the identity $\nabla_x(P_t f)(x_t)=0$ with respect to the regularization parameter $t$, one formally arrives at the Gaussian continuation equation~\cite{mobahi2015theoretical,ilersich2025deep,gu2026deep}.
\begin{equation}\label{eq:continuation-raw}
\partial_t \nabla_x(P_t f)(x_t)+\nabla_x^2(P_t f)(x_t)\,\dot x_t=0.
\end{equation}
Here, $\nabla_x$ and $\nabla_x^2$ denote the gradient and Hessian with respect to the spatial variable $x$, respectively. Whenever the Hessian $\nabla_x^2(P_t f)(x_t)$ is nonsingular, the evolution of the minimizing branch can be explicitly solved as
\begin{equation}\label{eq:continuation-inverse}
\dot x_t = -\bigl(\nabla_x^2(P_t f)(x_t)\bigr)^{-1}\partial_t \nabla_x(P_t f)(x_t).
\end{equation}
This identity makes clear that the continuation of the minimizer branch is strictly governed by the inverse Hessian of $P_t f$. Consequently, the path $t \mapsto x_t$ is locally well-defined and numerically stable when the regularized Hessian remains nondegenerate. However, for nonsmooth objectives, the invertibility of $\nabla_x^2(P_t f)$ is not a priori guaranteed; as $t \to 0^+$, the smoothing effect of the heat kernel vanishes, potentially leading to a singular or ill-conditioned Hessian at the limit.

The primary objective of this paper is to rigorously characterize how heat-kernel regularization retains nondegeneracy along the minimizing branch. We aim to show that even if the original objective $f$ lacks a classical Hessian at the minimizer, the convolution $P_t f$ generates a "surrogate" curvature whose asymptotic behavior is precisely determined by the local leading-order profile of $f$. By establishing this, we prove that the continuation equation remains locally solvable for all sufficiently small $t > 0$, providing a robust theoretical foundation for tracing minimizing branches from nonsmooth minima.

Previous studies have shown that global convexity is preserved under heat-kernel regularization: if $f$ is globally convex, then so is $P_t f$~\cite{ishige2026preservation}. While such results provide strong global structural guarantees, they often rely on idealized assumptions that are rarely satisfied in complex, non-convex optimization landscapes~\cite{ma2022beyond, jo2026neural}. Existing analyses remain limited in their ability to describe the behavior of the regularized Hessian near strictly nonsmooth or non-convex points. 

To bridge this gap, our work shifts the focus from global properties to a fine-grained local analysis, demonstrating that the restoration of nondegeneracy is a local phenomenon driven by the leading-order profile of the minimizer. This perspective allows us to establish the local solvability of the continuation equation for a broad class of nonsmooth functions, including those with subquadratic cusps where classical second-order information is entirely absent. Specifically, we analyze the objective $f$ under three mild assumptions: (Definition~\ref{ass:global-lower-barrier}) a global quadratic growth condition to ensure coercivity and the unique global minimizer of $f$ \cite{bonnans2013perturbation, drusvyatskiy2013tilt, chieu2021quadratic, davis2025local, khanh2025local}, (Definition~\ref{ass:leading-profile}) a local leading-order profile that characterizes the singularity near the minimizer~\cite{burke1993weak}, and (Definition~\ref{ass:tail-control}) polynomial tail control to guarantee that the convolution is well-defined (see also tempered distributions~\cite{stein1970singular}). 

Our primary objective is to investigate whether heat-kernel regularization induces a nondegenerate Hessian along the minimizing branch:
$$x_t\in \operatorname*{arg\,min}_{x\in\mathbb R^n} P_t f.$$ Under the assumptions above, our analysis first focuses on the small-$t$ regime. In our result, every global minimizer of $P_t f$ is forced to localize near the origin as $t\to0^+$; more precisely, we prove that $\|x_t\|_2 = O(\sqrt{t})$, so that the convergence follows the natural heat scale (Theorem~\ref{thm:existence-localization}). 

Next, our analysis of the nondegeneracy of the regularized Hessian $\nabla_x^2(P_t f)(x_t)$ reveals two distinct regimes determined by the local exponent $a \in [1, 2]$ (Theorem~\ref{thm:main-nondegeneracy}):

\noindent
{\bf 1. The quadratic regime ($a=2$).}
In this case, the minimizer has genuinely quadratic structure near the origin. The regularized Hessian remains at a finite positive scale along the minimizing branch (Example~\ref{ex:quadratic}). More precisely, for all sufficiently small $t>0$,
$$\nabla_x^2(P_t f)(x_t) \succ c_0 I,$$
for some constant $c_0>0$. Here, the symbol $\succ$ indicates that the Hessian is bounded below by a positive definite matrix, ensuring that the regularized objective maintains a strictly convex-like curvature at the minimizer. That is, the Hessian does not vanish as $t\to0^+$. In particular, the result applies even when the original function $f$ fails to have a classical Hessian at the distinguished minimizer (e.g., Example~\ref{ex:nonsmooth-quadratic}). Thus the regularized landscape retains a standard nondegenerate quadratic geometry near the branch.

\noindent {\bf 2. The subquadratic cusp regime ($1\le a<2$).}
In this case, the mechanism of nondegeneracy is fundamentally different from the quadratic case. Instead of remaining at a finite positive scale, the regularized Hessian blows up as $t\to0^+$(Example~\ref{ex:linear},~Example~\ref{ex:nonsmooth-linear}). More precisely, its size is governed by the exponent $a$, with the model scale
$$\nabla_x^2(P_t f)(x_t) \succsim t^{(a-2)/2}I.$$
The symbol $\succsim$ denotes that the minimum eigenvalue of the Hessian grows at least at the rate of $t^{(a-2)/2}$. While the landscape becomes singular, this uniform blow-up ensures that the Hessian remains invertible for any $t>0$, effectively "sharpening" the nondegeneracy. 

Therefore, the local leading-order exponent completely distinguishes the two types of nondegeneracy produced by heat-kernel regularization. The case $a=2$ corresponds to a finite positive-definite Hessian scale, whereas every subquadratic exponent $1\le a<2$ leads to a positive-definite blow-up regime. In both cases, the continuation equation remains meaningful along the minimizing branch for all sufficiently small positive times, although the asymptotic mechanism is different.

In general, however, globally tracking such a branch over the entire range of $t>0$ is challenging in nonconvex settings, since competing local-minimum branches may coexist and exchange global optimality at intermediate smoothing scales (see Example~\ref{ex:discontinuous_branch}, Figure~\ref{fig:3}). Thus, a single globally smooth minimizing continuation path need not exist in full generality. Our analysis therefore focuses on the small-$t$ regime. 

Within this regime, we also identify two further structural consequences. First, for every sufficiently small scale, each global minimizer lies on a unique local $C^1$ critical branch, so that the continuation equation is rigorously valid near the minimizing branch (Proposition~\ref{prop:local-branch}). Second, along any nondegenerate critical branch, the associated branch energy satisfies the identity
$$
\frac{d}{dt}(P_t f)(x_t)=\partial_t(P_t f)(x_t)=\Delta(P_t f)(x_t).
$$
This provides a natural analytic quantity for comparing competing branches (Lemma~\ref{lem:branch-energy}). Furthermore, under a finite-limit hypothesis, continuation past a finite terminal scale is equivalent to the persistence of uniform Hessian nondegeneracy (Theorem~\ref{thm:continuation-alternative}. These results clarify that the principal obstruction to local continuation is not the nonsmoothness of the original objective itself, but the possible breakdown of nondegeneracy or the global competition between distinct branches at larger smoothing scales.

In summary, these results demonstrate that heat-kernel convolution provides a unified framework for transitioning from nonsmooth minima to smooth, traceable branches while preserving the necessary nondegenerate structure. Our findings suggest that continuation-equation-based methods offer a robust analytical foundation for nonsmooth optimization across diverse applications, including solving inverse problems \cite{yang2014mollification}, filtering geometric noise \cite{seo2010heat, lindeberg2013scale, chung2019heat}, resolving ill-conditioned systems \cite{luo2024regularization}, and navigating highly non-convex landscapes in deep learning \cite{xu2025global, gu2026deep}. This provides a systematic starting point for studying singular problems through the lens of regularized second-order geometry.

\section{Definitions} 
For simplicity and without loss of generality, we assume the distinguished minimizer is located at the origin with $f(0) = 0$
\begin{definition}\label{ass:global-lower-barrier} We say that $f:\mathbb R^n\to\mathbb R$ satisfies a global quadratic lower bound if there exists a constant $c>0$ such that
\begin{equation}\label{eq:quadratic_growth}
    f(x)\ge c\|x\|_2^2, \qquad\forall x\in\mathbb R^n.
\end{equation}
\end{definition}
Under this condition, $x=0$ is the unique global minimizer of $f$. For the sake of simplicity in the subsequent analysis, we normalize $f$ by $c$ and assume $c=1$.

As a local assumption, we describe the behavior of $f$ near the minimizer through a leading-order profile together with a small remainder $r$. For $a\in[1,2]$, we introduce the isotropic profile
$$\Phi_a(x):=\sum_{i=1}^n |x_i|^a.$$

\begin{definition}
\label{ass:leading-profile}
We say that $f$ has a local leading-order profile $\Phi_a$ at the origin if there exist $a\in[1,2]$, $r_0>0$, a function $r:\mathbb R^n\to\mathbb R$, and a function
$$\omega:[0,r_0)\to[0,\infty)$$
such that $\lim_{\rho\to0^+}\omega(\rho)=0$ and
$$f(x)=\Phi_a(x)+r(x),\qquad \forall x\in B_{r_0}(0)=\{x\,|\,\|x\|<r_0\},$$
where
$$|r(x)|\le \omega(\|x\|_2)\,\Phi_a(x),\qquad \forall x\in B_{r_0}(0).$$
\end{definition}
\begin{remark}
This assumption implies that for all $x$ sufficiently close to the origin, the objective is bounded as $\frac{1}{2}\Phi_a(x) \le f(x) \le \frac{3}{2}\Phi_a(x)$, meaning $f$ and $\Phi_a$ share the same local scaling. We also exclude $a > 2$ as it would violate the global quadratic growth Eq.\eqref{eq:quadratic_growth} in Definition~\ref{ass:global-lower-barrier} and lead to a vanishing Hessian determinant (e.g., Example~\ref{ex:quadratic}), rendering the continuation equation ill-posed.
\end{remark}
%\begin{uremark}
%The shirinking condition $\lim_{\rho\to0^+}\omega(\rho)=0$ is important, because ...
%\end{uremark}
In addition, we assume that $f$ has at most polynomial growth at infinity, namely, there exist constants $C>0$ and $m\ge2$ such that
\begin{definition}\label{ass:tail-control} 
We say that $f$ has polynomial tail control if there exist constants $C>0$ and $m\ge2$ such that $f(x)\le C\bigl(1+\|x\|_2^m\bigr)$, $\forall x\in\mathbb R^n$.
\end{definition}
This assumption guarantees that $P_t f$ is well defined for all $t>0$; in the present argument, it is used only to control the far-field contribution.
\begin{remark}\label{rem:r-extension}
For later use, we fix a global remainder $r:\mathbb R^n\to\mathbb R$ associated with the local decomposition in Definition~\ref{ass:leading-profile}. More precisely, we choose $r$ so that $f(x)=\Phi_a(x)+r(x)$ for all $x\in B_{r_0}(0)$, and extend it to all of $\mathbb R^n$ in such a way that $r$ has polynomial growth at infinity. This is possible because $f$ has polynomial tail control by Definition~\ref{ass:tail-control}, while $\Phi_a(x)\lesssim 1+\|x\|_2^2$ for $a\in[1,2]$. In particular, after fixing such an extension, there exist constants $C>0$ and $m\ge2$ such that $|r(x)|\le C\bigl(1+\|x\|_2^m\bigr),\qquad \forall x\in\mathbb R^n.$
\end{remark}

\begin{definition}\label{def:critical-branch}
Let $I\subset (0,\infty)$ be an interval. A map $x:I\to\mathbb R^n$ is called a \emph{critical-point branch} of $P_t f$ on $I$ if $x\in C^1(I;\mathbb R^n)$ and
$$
\nabla_x(P_t f)(x_t)=0
\qquad\text{for all }t\in I.
$$
\end{definition}

\begin{definition}\label{def:nondegenerate-branch}
A critical-point branch $x:I\to\mathbb R^n$ is called a \emph{nondegenerate local minimizing branch} if
$$
\nabla_x^2(P_t f)(x_t)\succ 0
\qquad\text{for all }t\in I.
$$
\end{definition}

\begin{definition}\label{def:maximal-branch}
Let $x:(0,T_{\max})\to\mathbb R^n$ be a nondegenerate local minimizing branch. We say that $x$ is \emph{maximal} if there do not exist $\varepsilon>0$ and a nondegenerate local minimizing branch
$$
\widetilde x:(0,T_{\max}+\varepsilon)\to\mathbb R^n
$$
such that
$$
\widetilde x_t=x_t
\qquad\text{for all }t\in(0,T_{\max}).
$$
\end{definition}

\begin{figure}[t!]
  \centering  \label{fig:1}\includegraphics[width=\textwidth]{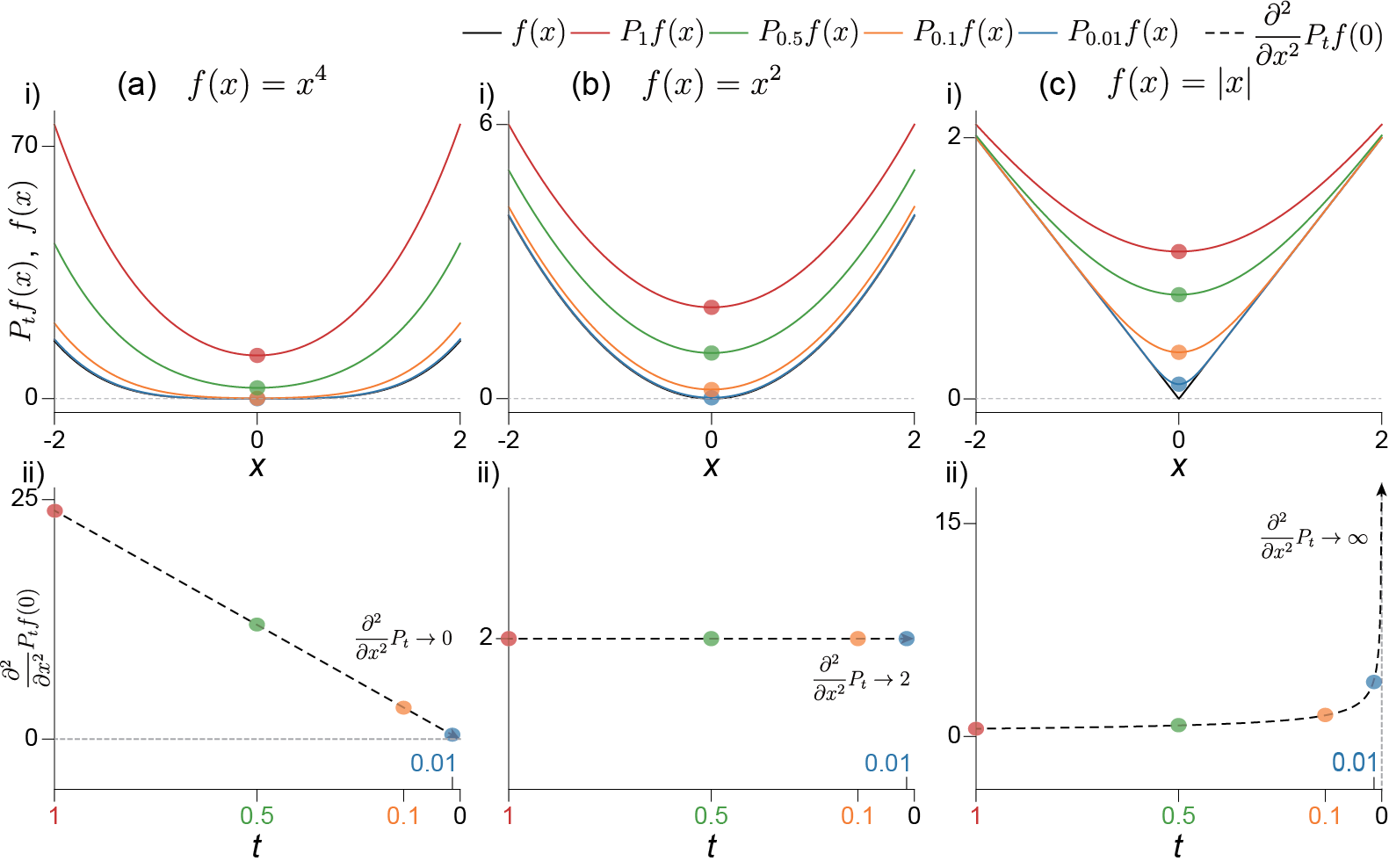}
  \caption{\textbf{Comparison of the second derivatives of $P_t f$ for the prototype functions $f(x)=x^4$, $x^2$, and $|x|$.} (a) i) We plot $f(x)=x^4$ and its heat-kernel regularization $P_t f(x)$ for several values of $t$ (e.g., $t=1,\,0.5,\,0.1,\,0.01$), where the minimizer remains at $x_t=0$. ii) We plot $\frac{\partial^2}{\partial x^2}P_t f(0)$ for $0<t<1$, with colored markers indicating its values at the selected parameter levels. In this case, $\frac{\partial^2}{\partial x^2}P_t f(0)\to 0$ as $t\to0^+$. (b) i) We repeat the same visualization for $f(x)=x^2$, where again the minimizer is $x_t=0$. ii) In contrast to (a), we have $\frac{\partial^2}{\partial x^2}P_t f(0)=2$ for all $0<t<1$, indicating that the Hessian in the corresponding continuation equation is preserved under heat regularization. (c) i) We plot $f(x)=|x|$ and its regularization $P_t f(x)$ at the same parameter levels, with minimizer $x_t=0$. ii) In this case, the second derivative $\frac{\partial^2}{\partial x^2}P_t f(0)$ blows up as $t\to0^+$.}
\end{figure}
\section{Examples} The following simple examples are provided to clearly illustrate the qualitatively different behaviors of the regularized Hessian that arise in our analysis. In particular, they highlight the clear distinction between the quadratic regime, where the Hessian remains at a finite positive scale, and the subquadratic regime, where it blows up as $t\to0^+$. To demonstrate these curvature dynamics transparently, the minimizing branches in the examples are chosen to be trivial ($x_t=0$ for all $t>0$) except the last example~Example~\ref{ex:discontinuous_branch}.

\begin{example}\label{ex:quadratic} For $m\ge1$ and $x\in\mathbb R$, consider $f(x)=x^{2m}.$ Let $u(t,x):=P_t f(x)$. Then $u$ solves the heat equation $\partial_t u=\partial_{xx}u$ with initial condition $u(0,x)=f(x)$. In this case, one has the explicit formula (see also caloric polynomials in \cite{cannon1984one})
$$u(t,x)
=
\sum_{j=0}^{m}\frac{(2m)!}{j!(2m-2j)!}\,x^{2m-2j}\,t^j,$$
so the minimizing branch is given by 
$x_t=0$ for all $t>0$. Differentiating twice with respect to $x$ and evaluating at $0$, we obtain
\begin{equation}\label{eq:Pt-even-power-second}
\partial_{xx}u(t,0)
=
\frac{(2m)!}{(m-1)!}\,t^{m-1}.
\end{equation}
We distinguish two cases.

\noindent
\emph{Case 1: $m>1$ ($f(x)=x^{2m}$).} By Eq.\eqref{eq:Pt-even-power-second},
$$
\frac{\partial^2}{\partial x^2}P_{t}f(0)=\frac{(2m)!}{(m-1)!}\,t^{m-1}\to0
\qquad\text{as }t\to0^+.
$$
Thus the heat-kernel regularization produces a collapse of curvature at the minimizer. In particular, this example shows why the quadratic-growth assumption is essential: it excludes flatter-than-quadratic minima ($a > 2$) for which $\frac{\partial^2}{\partial x^2}P_{t}f(0)$ vanishes, causing the inverse Hessian coefficient in the one-dimensional continuation equation to diverge~(Figure~\ref{fig:1}(a)).

\noindent
\emph{Case 2: $m=1$ ($f(x)=x^2$).} In this case,
$$
\frac{\partial^2}{\partial x^2}P_{t}f(0)=2.
$$
Thus the regularized second derivative remains uniformly positive along the minimizing branch. This is the model example of the regime $a=2$, where the inverse Hessian in the continuation equation~Eq.\eqref{eq:continuation-inverse} stays at a finite nondegenerate scale as $t\to0^+$ (Figure~\ref{fig:1}(b)).
\end{example}

\begin{example}\label{ex:linear} For $x\in\mathbb R$, consider $f(x)=|x|$. Then 
$$ P_t f(x) = \frac{2\sqrt t}{\sqrt\pi}e^{-x^2/(4t)} + |x|\,\operatorname{erf}\!\left(\frac{|x|}{2\sqrt t}\right), $$
and the minimizing branch is given by 
$x_t=0$ for all $t>0$. Moreover,
$$
\frac{\partial^2}{\partial x^2}P_{t}f(x)
=
\frac{1}{\sqrt{\pi t}}e^{-x^2/(4t)},
$$
so that along the minimizing branch ($x_t=0$), $\frac{\partial^2}{\partial x^2}P_{t}f(x_t)=\frac{1}{\sqrt{\pi t}}
\to +\infty$ as $t\to0^+.$ Consequently, the inverse Hessian satisfies $\bigl(\frac{\partial^2}{\partial x^2}P_{t}f(x_t)\bigr)^{-1}=\sqrt{\pi t}\to 0$ as $t\to0^+.$ This blow-up in curvature does not obstruct the analysis; on the contrary, it ensures that the coefficient in the continuation equation remains invertible for all $t>0$. The infinite curvature at the limit effectively locks the branch to the origin, providing a natural continuation branch even when the original function lacks a classical gradient and Hessian. Strictly speaking, the prototype $f(x)=|x|$ does not satisfy the global quadratic-growth assumption~Definition~\ref{ass:global-lower-barrier}. However, this causes no essential difficulty here: one may modify $f$ outside a sufficiently large compact set so that it has quadratic growth at infinity, without changing the local convolution structure or the asymptotic behavior of $\frac{\partial^2}{\partial x^2}P_t f(0)$ near the origin Figure~\ref{fig:1}(c).
\end{example}

\begin{figure}[ht!]
  \centering  \label{fig:2}\includegraphics[width=\textwidth]{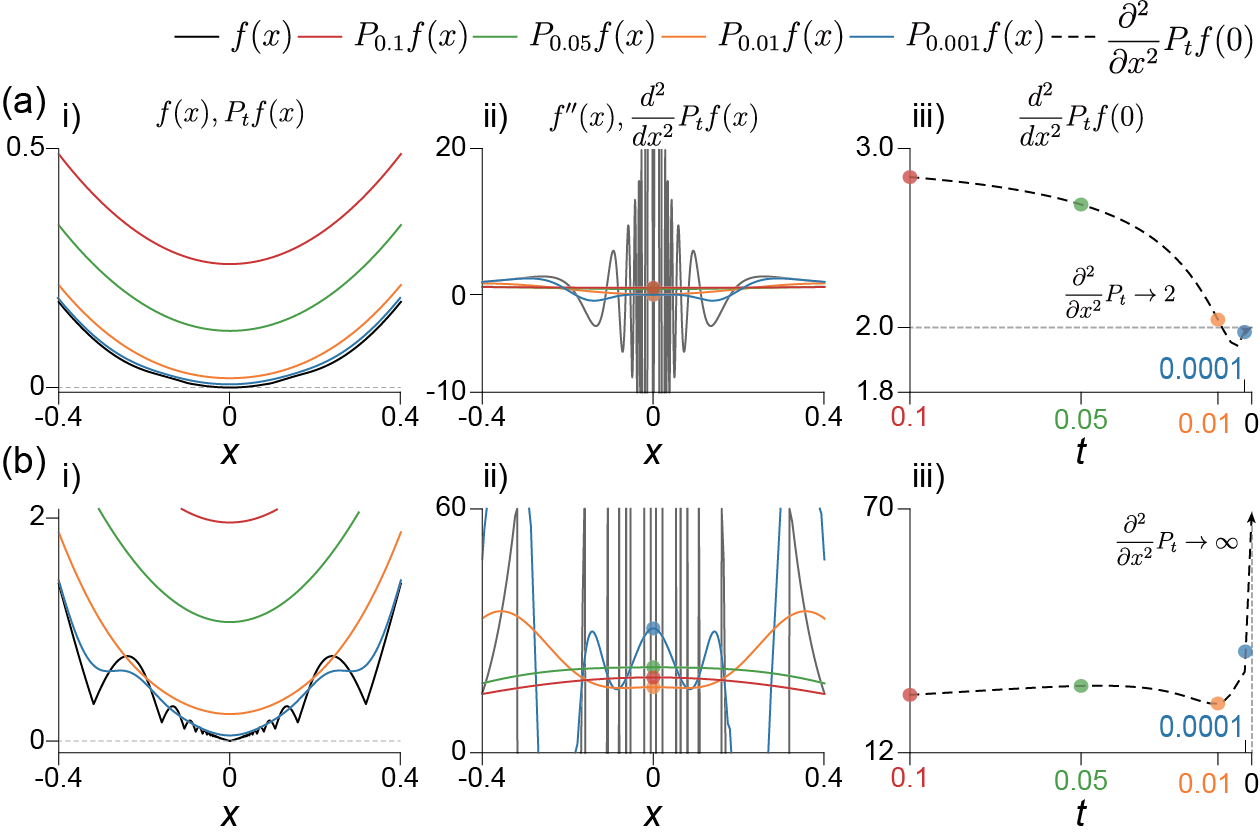}
\caption{\textbf{Nonsmooth examples with local leading-order exponents $a=2$ (Example~\ref{ex:nonsmooth-quadratic}) and $a=1$ (Example~\ref{ex:nonsmooth-linear}), respectively.} 
(a) i) Let $f(0)=0$ and, for $x\neq0$, let $f(x)=x^2+\frac12 x^3\sin(1/x)$. We draw $f(x)$ and its heat-kernel regularization $P_t f$ for $t=0.1,\,0.05,\,0.01,\,0.001$. 
ii) To make the second-derivative behavior more transparent, we also plot the profiles of $f''(x)$ and $\frac{\partial^2}{\partial x^2}P_t f$, where the colored dots indicate the minimizer locations at the selected parameter levels. Notably, $f''(0)$ does not exist. 
iii) We further track the second derivative of $P_t f$ at its minimizer. As a result, $\frac{\partial^2}{\partial x^2}P_t f$ at the minimizer converges to $2$ as $t\to0^+$. 
(b) i) Let $f(0)=0$ and, for $x\neq0$, let $f(x)=|x|\bigl(1+10|x||\sin(1/x)|\bigr)$. We again draw $f(x)$ and its heat-kernel regularization $P_t f$ for the same parameter values. 
ii) We plot the corresponding second-derivative profiles and indicate the minimizer locations by colored dots. 
iii) We then track the second derivative of $P_t f$ at its minimizer. Unlike in (a), $\frac{\partial^2}{\partial x^2}P_t f$ at the minimizer blows up as $t\to0^+$.}
\end{figure}
\begin{example}\label{ex:nonsmooth-quadratic}
Define $f:\mathbb R\to\mathbb R$ by
$$f(x)=
\begin{cases}
0, & x=0,\\
x^2+\frac12 x^3\sin(1/x), & x\ne0.
\end{cases}$$
Then $f$ satisfies Definition~\ref{ass:global-lower-barrier,ass:leading-profile,ass:tail-control} with $a=2$. The first and second derivatives are given by
$$f'(x)=
\begin{cases}
    0, & x=0,\\
2x+\frac12\bigl(3x^2\sin(1/x)-x\cos(1/x)\bigr), & x\ne 0,
\end{cases}
$$
and
$$f''(x)=2+3x\sin(1/x)-2\cos(1/x)-\frac{1}{2x}\sin(1/x).$$
In particular, $f''(0)$ does not exist, since the last term oscillates without limit as $x\to0$.

Nevertheless, for every $t>0$, the heat regularization $P_t f$ is smooth, so $\frac{\partial^2}{\partial x^2}P_{t}f(0)$ is well defined. More precisely,
$$\frac{\partial^2}{\partial x^2}P_{t}f(0)=2+\frac12\int_{\mathbb R}\left(\frac{y^2}{4t^2}-\frac{1}{2t}\right)G_t(y)\,y^3\sin(1/y)\,dy.$$
After the change of variables $y=\sqrt t\,z$, this becomes
$$\frac{\partial^2}{\partial x^2}P_{t}f(0)=2+\frac{\sqrt t}{2\sqrt{4\pi}}\int_{\mathbb R}\left(\frac{z^2}{4}-\frac12\right)e^{-z^2/4}z^3\sin\left(\frac{1}{\sqrt t\,z}\right)\,dz.$$
Since the integral is uniformly bounded, it follows that
$$\frac{\partial^2}{\partial x^2}P_{t}f(0)\to 2\qquad\text{as }t\to0^+.$$

Thus the original Hessian at the minimizer does not exist, yet the heat-regularized Hessian is invertible for every $t>0$ and converges to a finite positive limit (Figure~\ref{fig:2}(a)).
\end{example}

\begin{example}\label{ex:nonsmooth-linear}
Consider
$$
f(x)=
\begin{cases}
0, & x=0,\\
|x|\bigl(1+10|x||\sin(1/x)|\bigr), & x\ne0.
\end{cases}
$$
So the local leading-order profile is $\Phi_1(x)=|x|$ (the local exponent is $a=1$). Thus this gives a nonsmooth example illustrating the linear regime and the corresponding $t^{-1/2}$ Hessian scale under heat-kernel regularization (Figure~\ref{fig:2}(b)).

Although this function does not satisfy the global quadratic-growth assumption in its present form, it can be modified outside a neighborhood of the origin, for instance by prescribing a quadratic-growth branch for $|x|>1$, without changing the local leading-order behavior near the origin.
\end{example}

\begin{figure}[t!]
  \centering  \label{fig:3}\includegraphics[width=\textwidth]{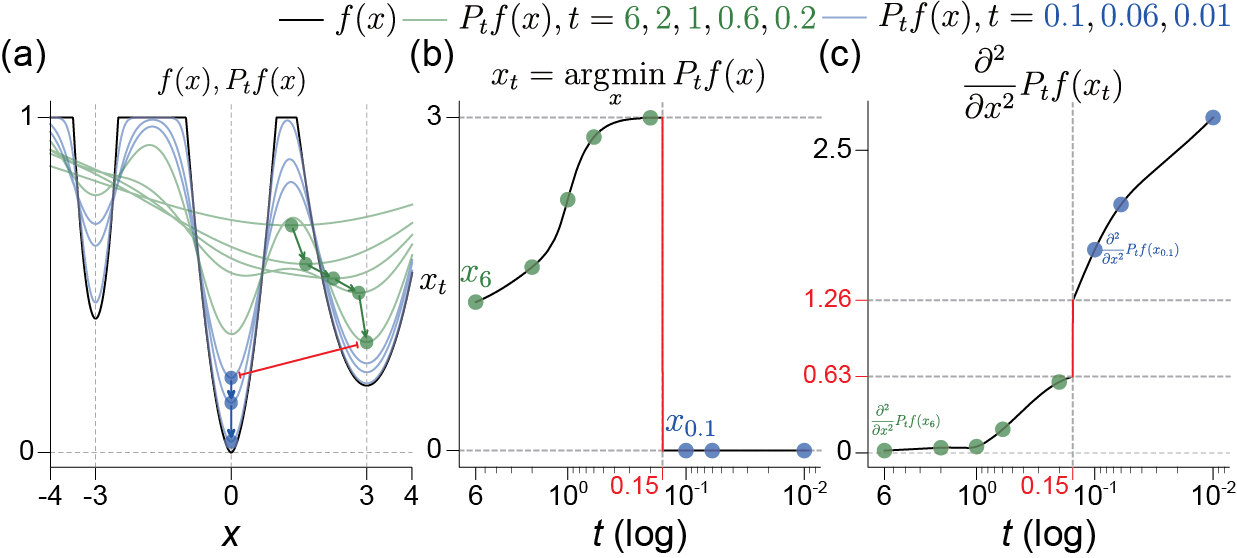}
  \caption{\textbf{Discontinuous minimizing branch $t\mapsto x_t$ of $P_t f(x)$ when $f$ has multiple local minima.} (a) For $f$ in Example~\ref{ex:discontinuous_branch}, which has local minima near $x=-3$, $0$, and $3$, we first plot $f(x)$ together with $P_t f(x)$ for selected parameter values. The global minimizers of $P_t f$ are marked by green dots for $t=6,\,2,\,0.6,\,0.2$ and by blue dots for $t=0.1,\,0.06,\,0.01$. This illustrates that the minimizing branch $t\mapsto x_t$ can undergo a discontinuous jump as $t$ varies. (b) We then plot the minimizer map $t\mapsto x_t$, which reveals a clear branch switch near $t\approx 0.15$ (red line). (c) We also track $\frac{\partial^2}{\partial x^2}P_t f(x_t)$ along the minimizing branch. In a neighborhood of the discontinuity point ($t\approx 0.15$), the second derivative changes abruptly, increasing from approximately $0.63$ to $1.26$. For $t<0.15$, the branch becomes continuous again, and the second derivative blows up as $t\to0^+$. This example shows that the Gaussian continuation equation need not define a globally smooth minimizing path, although for sufficiently small $t>0$ one may still identify a local branch that converges to the global minimizer of the original objective $f$.}
\end{figure}

\begin{example}\label{ex:discontinuous_branch}
Define $f:\mathbb R\to\mathbb R$ by
$$
f(x)=
\begin{cases}
|x|^{1.7}, & |x|\le 1,\\
0.4+0.6(2|x+3|)^2, & |x+3|\le 0.5,\\
0.2+0.8\left(\dfrac{|x-3|}{1.55}\right)^2, & |x-3|\le 1.55,\\
x^2, & |x|\ \text{sufficiently large},\\
1, & \text{otherwise}.
\end{cases}
$$
Then $f(0)=0$, and $f(x)>0$ for every $x\neq 0$, so $x=0$ is the unique global minimizer of $f$. Moreover, near the origin the leading-order profile is $f(x)\sim |x|^{1.7}$, so the local exponent is $a=1.7$. $f$ also contains two asymmetric side valleys centered near $x=-3$ and $x=3$ (Figure~\ref{fig:3}(a)). The left valley is narrower and shallower, while the right valley is wider and deeper. As a consequence, the global minimizer of $P_t f$ need not remain near the origin for intermediate values of $t$. Indeed, $P_t f$ may favor the wider right-hand valley over the narrow minimum at the origin, even though the latter is the unique minimizer of the original objective.

Thus, if one selects $x_t\in \operatorname*{arg\,min}_{x\in\mathbb R} P_t f$, the resulting minimizing path $t\mapsto x_t$ need not form a single globally smooth continuation branch (arrows in Figure~\ref{fig:3}(a)). In particular, as the smoothing scale varies, competing local-minimum branches may exchange global optimality, so that the selected global minimizer can jump discontinuously from one branch to another (Figure~\ref{fig:3}(b)). Correspondingly, quantities evaluated along the selected minimizer, such as $\frac{\partial^2}{\partial x^2}P_{t}f(x_t)$, may also exhibit jump discontinuities as functions of $t$, even though $P_t f$ is smooth in $(t,x)$ for every $t>0$ (Figure~\ref{fig:3}(c)).

This example shows that, in nonconvex settings, globally tracking the minimizing branch of $P_t f$ over all smoothing scales can be genuinely delicate. Nevertheless, in the small-$t$ regime, the heat-regularized minimizers still localize near the original minimizer at the origin, in accordance with the general localization theory.
\end{example}

\section{Main results}
Throughout this section, we assume that $f$ satisfies Definitions~\ref{ass:global-lower-barrier},~\ref{ass:leading-profile}, and \ref{ass:tail-control}.
\subsection{Existence and localization of minimizers}
\begin{lemma}\label{lem:gaussian-lower-bound}
For every $t>0$ and every $x\in\mathbb R^n$, $$(P_t f)(x)\ge \|x\|_2^2+2nt.$$ 
\end{lemma}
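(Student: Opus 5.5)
The plan is to combine monotonicity of the heat semigroup with the exact computation of $P_t$ applied to the quadratic $\|x\|_2^2$. By Definition~\ref{ass:global-lower-barrier}, normalized with $c=1$, we have $f(y)\ge \|y\|_2^2$ for all $y$. The heat kernel $G_t$ is strictly positive and integrates to one. Moreover, the convolution defining $P_t f$ converges absolutely, by the polynomial tail control of Definition~\ref{ass:tail-control} together with the Gaussian decay of $G_t$. Hence
\[
(P_t f)(x)=\int_{\mathbb R^n} G_t(x-y)\,f(y)\,dy \;\ge\; \int_{\mathbb R^n} G_t(x-y)\,\|y\|_2^2\,dy = \bigl(P_t\|\cdot\|_2^2\bigr)(x).
\]

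Next we compute the right-hand side explicitly. Substituting $y=x-z$ and expanding $\|x-z\|_2^2=\|x\|_2^2-2\langle x,z\rangle+\|z\|_2^2$, we use three moment identities of the Gaussian:
\[
\int_{\mathbb R^n} G_t=1,\qquad \int_{\mathbb R^n} z\,G_t(z)\,dz=0,\qquad \int_{\mathbb R^n}\|z\|_2^2\,G_t(z)\,dz=2nt.
\]
The first holds by normalization and the second by symmetry. For the third, $G_t$ is a product of one-dimensional Gaussians, each with variance $2t$, and there are $n$ coordinates. It follows that $\bigl(P_t\|\cdot\|_2^2\bigr)(x)=\|x\|_2^2+2nt$, which gives the claimed bound.

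As a consistency check, this agrees with the caloric polynomial formula in Example~\ref{ex:quadratic} for $m=1$, summed over coordinates: $u(t,x)=x^2+2t$ in each coordinate.

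There is no real obstacle here. The only points that need care are:
\begin{itemize}
\item confirming that the variance of $G_t$ is $2t$ per coordinate, which comes from the $4t$ in the exponent;
\item noting that the integrals are finite, so the pointwise inequality may legitimately be integrated.
\end{itemize}
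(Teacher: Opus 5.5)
Your proof is correct and follows the paper's argument. You apply monotonicity of $P_t$ (from the positivity of $G_t$) to $f(y)\ge\|y\|_2^2$, then use the identity $P_t(\|\cdot\|_2^2)(x)=\|x\|_2^2+2nt$; your moment computation (variance $2t$ per coordinate) and your remark on absolute convergence just spell out what the paper takes from Example~\ref{ex:quadratic}.
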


\begin{proof}
Since the heat kernel $G_t$ in Eq.\eqref{eq:def-Pt} is nonnegative, applying $P_t$ to both sides of Eq.\eqref{eq:quadratic_growth} yields
$$(P_t f)(x)\ge P_t(\|\cdot\|_2^2)(x).$$ 
Moreover, $P_t(\|\cdot\|_2^2)(x)=\|x\|_2^2+2nt$ (see also Example~\ref{ex:quadratic} for the case $n=1$). Therefore, $(P_t f)(x)\ge \|x\|_2^2+2nt$, which proves the result.
\end{proof}

\begin{corollary}
For each $t>0$, the function $P_t f$ is coercive and therefore admits at least one global minimizer
$$x_t\in \operatorname*{argmin}_{x\in\mathbb R^n} P_t f.$$
Moreover, any such global minimizer satisfies
$$\nabla(P_t f)(x_t)=0.$$
\end{corollary}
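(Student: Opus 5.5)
The plan is to prove the three claims in order: $P_t f$ is finite and continuous, it is coercive so a minimizer exists, and it is differentiable so the first-order condition applies.

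\emph{Finiteness and continuity.} Fix $t>0$. By Definition~\ref{ass:global-lower-barrier} and Definition~\ref{ass:tail-control},
\[
0\le f(y)\le C\bigl(1+\|y\|_2^m\bigr).
\]
Since $G_t$ has Gaussian decay, the integral in Eq.~\eqref{eq:def-Pt} converges for every $x$. For $x$ in a bounded set $\{\|x\|_2\le R\}$ we have the domination
\[
G_t(x-y)\,|f(y)|\le C_{R,t}\,e^{-\|y\|_2^2/(8t)}\bigl(1+\|y\|_2^m\bigr).
\]
This uses $\|x-y\|_2^2\ge \tfrac12\|y\|_2^2-\|x\|_2^2$. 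Dominated convergence then shows that $P_t f$ is finite and continuous on $\mathbb R^n$. Measurability of $f$ is implicitly assumed so that the convolution makes sense.

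\emph{Coercivity and existence.} Lemma~\ref{lem:gaussian-lower-bound} gives $(P_t f)(x)\ge \|x\|_2^2+2nt$, so $P_t f(x)\to\infty$ as $\|x\|_2\to\infty$. Pick any point, say $x=0$, and set $M:=(P_tf)(0)<\infty$. The sublevel set $\{x:(P_tf)(x)\le M\}$ is nonempty, closed by continuity, and bounded because it lies in $\{\|x\|_2^2\le M\}$. By Weierstrass, $P_t f$ attains its minimum on this compact set. That minimum is a global minimizer, since outside the set $P_t f>M$.

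\emph{Criticality.} I will show $P_t f\in C^1$ (in fact $C^\infty$) by differentiating under the integral sign:
\[
\nabla (P_tf)(x)=\int \nabla_x G_t(x-y)\,f(y)\,dy,
\qquad
\nabla_x G_t(z)=-\frac{z}{2t}\,G_t(z).
\]
For $x$ ranging over a ball, the integrand is dominated by an integrable function of the form $C_{R,t}\bigl(1+\|y\|_2\bigr)\bigl(1+\|y\|_2^m\bigr)e^{-\|y\|_2^2/(8t)}$. This justifies the interchange and gives continuity of the gradient. Fermat's rule at the interior global minimizer $x_t$ then yields $\nabla(P_tf)(x_t)=0$.

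The main technical point, though routine, is the uniform domination needed to differentiate under the integral. The bound $\|x-y\|_2^2\ge\tfrac12\|y\|_2^2-\|x\|_2^2$ handles it, with a constant uniform over bounded $x$.
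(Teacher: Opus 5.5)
Your proposal is correct and follows the paper's route: coercivity from Lemma~\ref{lem:gaussian-lower-bound}, existence of a minimizer by continuity, and $\nabla(P_t f)(x_t)=0$ from differentiability via Fermat's rule. The only difference is that you also justify finiteness, continuity, and differentiation under the integral sign through the domination bound $\|x-y\|_2^2\ge\tfrac12\|y\|_2^2-\|x\|_2^2$, which the paper takes for granted.
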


\begin{proof}
By Lemma~\ref{lem:gaussian-lower-bound}, $(P_t f)(x)\to\infty$ as $\|x\|_2\to\infty$, so $P_t f$ is coercive. Since $P_t f$ is continuous for every $t>0$, it attains a global minimum at some point. Moreover, $P_t f$ is smooth for every $t>0$, thus any global minimizer $x_t$ satisfies $\nabla(P_t f)(x_t)=0$.
\end{proof}

\begin{lemma}\label{lem:Ptf0-estimate}
As $t\to0^+$, $P_t f(0)=O(t^{a/2})$.
\end{lemma}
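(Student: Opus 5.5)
We split the integral defining $P_t f(0)=\int_{\mathbb R^n}G_t(y)f(y)\,dy$ into a near region $B_{r_1}(0)$ and a far region $\mathbb R^n\setminus B_{r_1}(0)$. Here $r_1\in(0,r_0)$ is fixed so small that $\omega(\rho)\le \tfrac12$ for $\rho<r_1$. This is possible by Definition~\ref{ass:leading-profile}, and it gives $0\le f(y)\le \tfrac32\Phi_a(y)$ on $B_{r_1}(0)$, as in the remark following that definition. We use nonnegativity of $f$ from Definition~\ref{ass:global-lower-barrier}, so all integrands are nonnegative and only upper bounds are needed.

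\emph{Near region.} We enlarge the domain to all of $\mathbb R^n$:
$$\int_{B_{r_1}}G_t(y)f(y)\,dy\le \tfrac32\int_{\mathbb R^n}G_t(y)\sum_{i=1}^n|y_i|^a\,dy .$$
The substitution $y=\sqrt t\,z$ turns $G_t(y)\,dy$ into $G_1(z)\,dz$ and $|y_i|^a$ into $t^{a/2}|z_i|^a$. The right-hand side therefore equals
$$\tfrac32\,t^{a/2}\,n\int_{\mathbb R^n}G_1(z)|z_1|^a\,dz .$$
The last integral is a finite Gaussian absolute moment, since $a\in[1,2]$. So the near region contributes $O(t^{a/2})$ with an explicit constant.

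\emph{Far region.} By Definition~\ref{ass:tail-control}, $f(y)\le C(1+\|y\|_2^m)$. After the same rescaling, the far contribution is bounded by
$$C\int_{\|z\|_2\ge r_1/\sqrt t}G_1(z)\bigl(1+t^{m/2}\|z\|_2^m\bigr)\,dz .$$
For $t\le1$, we bound $t^{m/2}\le 1$. A standard Gaussian tail estimate then gives a bound of the form $C'(1+ r_1^{m}t^{-m/2})e^{-r_1^2/(8t)}$, up to polynomial factors in $r_1/\sqrt t$. This is $O(t^N)$ for every $N$, and in particular $o(t^{a/2})$.

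Adding the two contributions gives $P_t f(0)=O(t^{a/2})$. The only mildly delicate step is the far-field bound: polynomial growth of $f$ must be beaten by the exponential Gaussian tail uniformly as $t\to0^+$. This is handled by writing $e^{-\|z\|^2/4}\le e^{-\|z\|^2/8}e^{-r_1^2/(8t)}$ on the far region, and then absorbing the polynomial weight into the remaining $e^{-\|z\|^2/8}$ factor.
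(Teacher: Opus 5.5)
Your proposal is correct and follows essentially the paper's argument: split into a local ball and a far field, rescale $y=\sqrt t\,z$ to get $t^{a/2}$ times a finite Gaussian moment near the origin, and use polynomial tail control to show the far field is exponentially small. One improvement over the paper is your choice of $r_1<r_0$ with $\omega\le\tfrac12$ on $B_{r_1}(0)$. This justifies $f\le\tfrac32\Phi_a$ on the local region, which the paper simply applies on all of $B_{r_0}(0)$.
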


\begin{proof}
Decompose $P_t f(0)$ into a local contribution $I_1$ and a far-field contribution $I_2$:
$$P_t f(0)=\int_{B_{r_0}(0)} G_t(y)\,f(y)\,dy+\int_{\mathbb R^n\setminus B_{r_0}(0)} G_t(y)\,f(y)\,dy=:I_1(t)+I_2(t).$$
For the local part $I_1(t)$, using $f(y)\le \frac{3}{2}\Phi_a(y)$ in $B_{r_0}(0)$ (Definition~\ref{ass:leading-profile}), we have
$$I_1(t)\le \frac{3}{2}\int_{B_{r_0}(0)} G_t(y)\,\Phi_a(y)\,dy.$$
Since $\Phi_a(y)=\sum_{i=1}^n |y_i|^a\le C\|y\|_2^a$ for some constant $C>0$, it follows that
$$I_1(t)\le C\int_{\mathbb R^n} G_t(y)\,\|y\|_2^a\,dy.$$
Using the change of variables $y=\sqrt t\,z$, we get
$$\int_{\mathbb R^n} G_t(y)\,\|y\|_2^a\,dy=t^{a/2}\int_{\mathbb R^n}\frac{1}{(4\pi)^{n/2}}e^{-\|z\|_2^2/4}\,\|z\|_2^a\,dz.$$
Since the last integral is finite, we conclude that $|I_1(t)|\le Ct^{a/2}$, and hence $I_1(t)=O(t^{a/2})$.

For the far-field part $I_2(t)$, the polynomial tail control assumption (Definition~\ref{ass:tail-control}) implies that there exist constants $C>0$ and $m\ge 2$ such that
$$f(y)\le C(1+\|y\|_2^m)\qquad \text{for all } y\in\mathbb R^n.$$
Hence
$$I_2(t)\le C\int_{\|y\|_2\ge r_0} G_t(y)\,(1+\|y\|_2^m)\,dy.$$
Using the change of variables $y=\sqrt{t}\,z$, we obtain
$$I_2(t)\le C\int_{\|z\|_2\ge r_0/\sqrt{t}} \frac{1}{(4\pi)^{n/2}}e^{-\|z\|_2^2/4}\,(1+t^{m/2}\|z\|_2^m)\,dz.$$
Since $t\to0^+$, the lower bound $r_0/\sqrt{t}$ tends to infinity. We now estimate the two terms separately. First,
\begin{equation}\label{eq:gaussian_tail}
    \int_{\|z\|_2\ge r_0/\sqrt{t}} e^{-\|z\|_2^2/4}\,dz
\le C e^{-r_0^2/(8t)},
\end{equation}
where we used the standard Gaussian tail bound. For the second term, note that for sufficiently large $\|z\|_2$, $\|z\|_2^m e^{-\|z\|_2^2/4}\le C e^{-\|z\|_2^2/8}$. Therefore,
$$t^{m/2}\int_{\|z\|_2\ge r_0/\sqrt{t}} e^{-\|z\|_2^2/4}\|z\|_2^m\,dz\le C t^{m/2}\int_{\|z\|_2\ge r_0/\sqrt{t}} e^{-\|z\|_2^2/8}\,dz\le C e^{-r_0^2/(16t)}$$
for all sufficiently small $t>0$. Combining the two estimates, we obtain $|I_2(t)|\le C e^{-c/t}$ for some constant $c>0$ and all sufficiently small $t>0$. This proves that $P_t f(0)=I_1(t)+I_2(t)=O(t^{a/2})$.
\end{proof}

\begin{theorem}\label{thm:existence-localization} As $t\to0^+$, 
$\|x_t\|_2=O(\sqrt t)$.

\end{theorem}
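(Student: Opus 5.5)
The plan is to squeeze $P_t f(x_t)$ between an upper bound at the heat scale and a lower bound that grows in the rescaled variable $\xi_t:=x_t/\sqrt t$. The upper bound is immediate. Since $x_t$ is a global minimizer, Lemma~\ref{lem:Ptf0-estimate} gives
$$P_t f(x_t)\le P_t f(0)\le C_1 t^{a/2}$$
for all small $t$.

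\textbf{Step 1: coarse localization.} A direct comparison with Lemma~\ref{lem:gaussian-lower-bound} gives $\|x_t\|_2^2+2nt\le C_1t^{a/2}$. Hence $\|x_t\|_2\le C t^{a/4}$. This already proves the claim when $a=2$, but it is too weak for $1\le a<2$. Its real role is to show $x_t\to0$, so that $\|x_t\|_2<r_0/2$ for all sufficiently small $t$. The next step exploits the local profile of Definition~\ref{ass:leading-profile}.

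\textbf{Step 2: local lower bound in rescaled variables.} Shrink $r_0$ if needed so that $\omega\le 1/2$ on $[0,r_0)$; then $f\ge\frac12\Phi_a$ on $B_{r_0}(0)$. Since $f\ge0$ everywhere, for $\|x\|_2<r_0/2$ I drop the contribution from outside the ball:
$$P_t f(x)\ge \tfrac12\int_{B_{r_0}(0)}G_t(x-y)\,\Phi_a(y)\,dy.$$
Substitute $y=x+\sqrt t\,z$ and use the exact homogeneity $\Phi_a(x+\sqrt t z)=t^{a/2}\Phi_a(\xi+z)$ with $\xi=x/\sqrt t$. Let $Z$ have density $(4\pi)^{-n/2}e^{-\|z\|_2^2/4}$. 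This yields
$$P_t f(x)\ge \tfrac12 t^{a/2}\Bigl(\mathbb E\,\Phi_a(\xi+Z)-E_t(\xi)\Bigr),$$
where $E_t(\xi)$ is the Gaussian integral of $\Phi_a(\xi+z)$ over the region $\|z\|_2\ge r_0/(2\sqrt t)$. On that region $\Phi_a(\xi+z)\lesssim\|\xi\|_2^a+\|z\|_2^a$. Arguing as for the far-field term $I_2$ in Lemma~\ref{lem:Ptf0-estimate}, this gives $E_t(\xi)\le Ce^{-c/t}\bigl(1+\|\xi\|_2^a\bigr)$.

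\textbf{Step 3: convexity and conclusion.} Each map $s\mapsto|s|^a$ is convex because $a\ge1$, and $Z$ has mean zero. Jensen's inequality applied coordinatewise therefore gives
$$\mathbb E\,\Phi_a(\xi+Z)\ge\Phi_a(\xi)\ge n^{1-a/2}\|\xi\|_2^a,$$
where the last step is the comparison of $\ell^a$ and $\ell^2$ norms. Take $\xi=\xi_t$, which is admissible by Step 1, and combine with the upper bound:
$$\tfrac12\bigl(n^{1-a/2}-Ce^{-c/t}\bigr)\|\xi_t\|_2^a-Ce^{-c/t}\le C_1 .$$
For $t$ small the coefficient of $\|\xi_t\|_2^a$ is bounded below by a positive constant. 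Hence $\|\xi_t\|_2$ is bounded, i.e.\ $\|x_t\|_2=O(\sqrt t)$.

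The main obstacle is Step 2. Its lower bound must hold uniformly in $\xi$ ranging up to size $r_0/(2\sqrt t)$, so the truncation error $E_t(\xi)$ has to be controlled relative to $\|\xi\|_2^a$ and not merely as $o(1)$. The exponential Gaussian tail makes this work, and Step 1 is needed precisely to place $x_t$ inside the ball where the profile bound $f\ge\frac12\Phi_a$ applies.
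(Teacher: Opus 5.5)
Your argument is correct and follows essentially the same route as the paper: coarse localization via Lemma~\ref{lem:gaussian-lower-bound}, the local bound $f\ge\frac12\Phi_a$ on $B_{r_0}(0)$, Gaussian-tail control of the truncation, Jensen's inequality for the convex profile, and comparison with Lemma~\ref{lem:Ptf0-estimate}; rescaling to $\xi_t=x_t/\sqrt t$ and dropping the far field via $f\ge0$ are only cosmetic differences. One slip: the norm comparison goes the other way, since $\|\xi\|_2^a\le\Phi_a(\xi)\le n^{1-a/2}\|\xi\|_2^a$ for $a\le2$ (take $\xi=e_1$), so you should use $\Phi_a(\xi)\ge\|\xi\|_2^a$, which leaves the rest of the argument unchanged.
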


\begin{proof}
Since $P_t f(x_t)\le P_t f(0)$,~Lemma~\ref{lem:Ptf0-estimate} yields $P_t f(x_t)\le Ct^{a/2}$. On the other hand,~Lemma~\ref{lem:gaussian-lower-bound} gives $(P_t f)(x_t)\ge \|x_t\|_2^2+2nt$. Hence
$$\|x_t\|_2^2+2nt\le Ct^{a/2}.$$
This immediately shows that $x_t\to0$ as $t\to0^+$.

To obtain the sharper estimate, we first note that $\|x_t\|_2\to0$ as $t\to0^+$. Hence $x_t\in B_{r_0/2}(0)$ for all sufficiently small $t>0$. Using the local lower bound $f(y)\ge \frac12\Phi_a(y)$ for $y\in B_{r_0}(0)$, we obtain
\begin{equation}\label{eq:Ptf_ineq}
    P_t f(x_t)\ge \frac12\int_{B_{r_0}(0)} G_t(x_t-y)\,\Phi_a(y)\,dy-R_t,
\end{equation}
where
$$R_t:=\left|\int_{\mathbb R^n\setminus B_{r_0}(0)} G_t(x_t-y)\,f(y)\,dy\right|.$$
Since $\|x_t\|_2\le r_0/2$, we have $\|x_t-y\|_2\ge r_0/2$ for every $y\in\mathbb R^n\setminus B_{r_0}(0)$. Together with the polynomial growth bound $|f(y)|\le C(1+\|y\|_2^m)$ for some $m\ge2$, the same change-of-variables argument ($y=x_t+\sqrt{t}\,z$) as in the proof of~Lemma~\ref{lem:Ptf0-estimate} yields 
\begin{equation}\label{eq:Rt}
    R_t\le Ce^{-c/t}
\end{equation}
for some constants $C>0$.

Next, we write
$$P_t\Phi_a(x_t)=\int_{B_{r_0}(0)} G_t(x_t-y)\,\Phi_a(y)\,dy+\underbrace{\int_{\mathbb R^n\setminus B_{r_0}(0)} G_t(x_t-y)\,\Phi_a(y)\,dy}_{=:S_t}.$$
Since $\Phi_a$ has polynomial growth and $\|x_t-y\|_2\ge r_0/2$ on $\mathbb R^n\setminus B_{r_0}(0)$, the same Gaussian tail estimate~Eq.\eqref{eq:gaussian_tail} gives
\begin{equation}\label{eq:St}
    S_t\le Ce^{-c/t}.
\end{equation}
Combining~Eq.\eqref{eq:Ptf_ineq}-Eq.\eqref{eq:St}, we obtain
$$P_t f(x_t)\ge \frac12 P_t\Phi_a(x_t)-Ce^{-c/t}.$$

Now, since $y \mapsto G_t(x_t-y)$ is a probability density centered at $x_t$ and $\Phi_a$ is convex for $1\le a\le 2$, Jensen's inequality yields
$$P_t\Phi_a(x_t)\ge \Phi_a(x_t).$$
Hence $P_t f(x_t)\ge \frac12\Phi_a(x_t)-Ce^{-c/t}$. Combining this with $P_t f(x_t)\le P_t f(0)=O(t^{a/2})$, we obtain $\Phi_a(x_t)=O(t^{a/2})$. Since $\Phi_a(x_t)=\sum_{i=1}^n |(x_t)_i|^a\ge C\|x_t\|_2^a$ for some constant $C>0$, it follows that $\|x_t\|_2^a=O(t^{a/2})$, and hence $\|x_t\|_2=O(\sqrt t)$. This completes the proof.
\end{proof}

\subsection{Hessian lower bounds along the minimizing branch}
\begin{lemma}\label{lem:model-hessian-lower-bound}
The following hold:
\begin{itemize}
\item If $a=2$, then $\nabla_x^2(P_t\Phi_2)(x_t)=2I$ for all $t>0$.
\item If $1\le a<2$, then there exist constants $c_0>0$ and $t_0>0$ such that $\nabla_x^2(P_t\Phi_a)(x_t)\succeq c_0\,t^{(a-2)/2}I$ for all $0<t<t_0$.
\end{itemize}
\end{lemma}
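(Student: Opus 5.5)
The plan is to exploit the separable structure of $\Phi_a$. Because $\Phi_a(x)=\sum_{i}|x_i|^a$ and the heat kernel factorizes as $G_t(y)=\prod_i g_t(y_i)$ with $g_t(s)=(4\pi t)^{-1/2}e^{-s^2/(4t)}$, we get $P_t\Phi_a(x)=\sum_i (p_t h_a)(x_i)$. Here $h_a(s)=|s|^a$ and $p_t$ denotes the one-dimensional heat semigroup. Consequently $\nabla_x^2(P_t\Phi_a)(x)$ is diagonal, with entries $(p_t h_a)''(x_i)$. Everything therefore reduces to a one-dimensional lower bound on $(p_th_a)''(s)$ for $|s|=O(\sqrt t)$, which covers the coordinates $s=(x_t)_i$ by Theorem~\ref{thm:existence-localization}.

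For $a=2$ the computation is immediate: $p_t h_2(s)=s^2+2t$, as in Example~\ref{ex:quadratic}, so each diagonal entry equals $2$ and the Hessian is $2I$ for every $x$, in particular at $x_t$.

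For $1\le a<2$, I will use the scaling $h_a(\sqrt t\,\sigma)=t^{a/2}h_a(\sigma)$. Together with $g_t(\sqrt t\,\cdot)\sqrt t=g_1(\cdot)$, this gives $(p_th_a)(\sqrt t\,\sigma)=t^{a/2}(p_1h_a)(\sigma)$, and differentiating twice in $s=\sqrt t\sigma$ yields
\[
(p_th_a)''(s)=t^{(a-2)/2}\,(p_1h_a)''\!\left(s/\sqrt t\right).
\]
Set $\psi:=(p_1h_a)''$. It then suffices to show that $\psi>0$ everywhere and that $\psi$ is continuous. Theorem~\ref{thm:existence-localization} supplies $K>0$ and $t_0>0$ with $|(x_t)_i|/\sqrt t\le \|x_t\|_2/\sqrt t\le K$ for $0<t<t_0$, so we may take $c_0:=\min_{|\sigma|\le K}\psi(\sigma)>0$ and obtain $\nabla_x^2(P_t\Phi_a)(x_t)\succeq c_0t^{(a-2)/2}I$.

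The main step is strict positivity of $\psi$. Since $h_a$ is convex and locally Lipschitz, its distributional second derivative $\mu_a:=h_a''$ is a nonnegative measure. For $a=1$ it is $2\delta_0$; for $1<a<2$ it is the function $a(a-1)|s|^{a-2}$, which is locally integrable because $a-2>-1$. Moving the derivatives onto the kernel gives $\psi(\sigma)=\int g_1(\sigma-\tau)\,d\mu_a(\tau)$. This integrand is positive and $\mu_a\neq0$, so $\psi(\sigma)>0$. The growth of $\mu_a$ is mild enough that the integral converges, and continuity in $\sigma$ follows from dominated convergence. To justify the integration by parts against the Gaussian, I would differentiate under the integral, $\psi(\sigma)=\int g_1''(\sigma-\tau)h_a(\tau)\,d\tau$, and integrate by parts twice. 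The boundary terms vanish because of the Gaussian decay, and the singularity at $\tau=0$ is handled by splitting at $0$: $h_a'$ is continuous for $a>1$, and for $a=1$ the jump of $h_1'$ produces the $2g_1(\sigma)$ term. As a consistency check, the case $a=1$ recovers $\psi(\sigma)=\frac{1}{\sqrt\pi}e^{-\sigma^2/4}$, matching Example~\ref{ex:linear}.
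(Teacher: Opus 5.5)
Your proposal is correct and follows essentially the same route as the paper: separability reduces the Hessian to the diagonal entries $(p_t h_a)''((x_t)_i)$, the case $a=2$ follows from $p_t h_2(s)=s^2+2t$, and for $1\le a<2$ the heat scaling $(p_t h_a)''(s)=t^{(a-2)/2}\psi(s/\sqrt t)$, together with positivity and continuity of $\psi$ and $\|x_t\|_2=O(\sqrt t)$, gives the bound. The only difference is that you treat $a=1$ through the same scaling argument with $h_1''=2\delta_0$, and you justify the distributional identity by integrating by parts. The paper instead handles $a=1$ separately via the explicit formula of Example~\ref{ex:linear} and cites ``standard properties'' for $1<a<2$.
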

Here, $\Phi_a(x):=\sum_{i=1}^n |x_i|^a$ denotes the leading profile from Definition~\ref{ass:leading-profile}, and $x_t$ is a family of minimizers satisfying $\|x_t\|_2=O(\sqrt t)$ by Theorem~\ref{thm:existence-localization}.

\begin{proof}
Since $\Phi_a$ is separable, the factorization of the heat kernel gives $P_t\Phi_a(x)=\sum_{i=1}^n p_t^a(x_i)$, where
$$p_t^a(s):=\frac{1}{\sqrt{4\pi t}}\int_{\mathbb R} e^{-(s-y)^2/(4t)}|y|^a\,dy.$$
Therefore, $\nabla_x^2(P_t\Phi_a)(x)=\operatorname{diag}((p_t^a)''(x_1),\dots,(p_t^a)''(x_n))$.

If $a=2$, then $p_t^2(s)=s^2+2t$ by Example~\ref{ex:quadratic}, so $(p_t^2)''(s)=2$ for all $s\in\mathbb R$. Hence $\nabla_x^2(P_t\Phi_2)(x_t)=2I$.

Now assume $1\le a<2$. Since $\|x_t\|_2=O(\sqrt t)$, there exist constants $M>0$ and $t_0>0$ such that $\|x_t\|_2\le M\sqrt t$ for all $0<t<t_0$. In particular, $|(x_t)_i|\le M\sqrt t$ for all $i=1,\dots,n$ and $0<t<t_0$. We distinguish two cases.

\emph{Case 1: $a=1$.} As in Example~\ref{ex:linear}, $(p_t^1)''(s)=\frac{1}{\sqrt{\pi t}}e^{-s^2/(4t)}$. Since $|(x_t)_i|\le M\sqrt t$, we have $e^{-((x_t)_i)^2/(4t)} \ge e^{-M^2/4}$. Hence, for every $i=1,\dots,n$ and $0<t<t_0$,
$$(p_t^1)''((x_t)_i)\ge \frac{1}{\sqrt{\pi t}}e^{-M^2/4}.$$
Therefore, $\nabla_x^2(P_t\Phi_1)(x_t)\succeq \frac{e^{-M^2/4}}{\sqrt\pi}\,t^{-1/2}I$.

\emph{Case 2: $1<a<2$.} By the scaling $y=\sqrt t\,z$, we have $p_t^a(s)=t^{a/2}\psi_a(s/\sqrt t)$, where
$$\psi_a(\xi):=\frac{1}{\sqrt{4\pi}}\int_{\mathbb R} e^{-(\xi-z)^2/4}|z|^a\,dz.$$
Differentiating twice gives $(p_t^a)''(s)=t^{(a-2)/2}\psi_a''(s/\sqrt t)$. By standard properties of convolution with the heat kernel, we can evaluate the second derivative using the distributional derivative of $|z|^a$. Since $1<a<2$, we have $(|z|^a)''=a(a-1)|z|^{a-2}$ in the distributional sense, with $|z|^{a-2}\in L^1_{\mathrm{loc}}(\mathbb R)$. Hence
$$\psi_a''(\xi)=\frac{a(a-1)}{\sqrt{4\pi}}\int_{\mathbb R} e^{-(\xi-z)^2/4}|z|^{a-2}\,dz>0 \qquad \text{for all }\xi\in\mathbb R.$$
Moreover, $\psi_a''$ is continuous, so
$$c_{a,M}:=\inf_{|\xi|\le M}\psi_a''(\xi)>0.$$
Since $|(x_t)_i|/\sqrt t\le M$, it follows that
$$(p_t^a)''((x_t)_i)\ge c_{a,M}\,t^{(a-2)/2}\qquad \text{for all }i=1,\dots,n,\ 0<t<t_0.$$
Therefore, $\nabla_x^2(P_t\Phi_a)(x_t)\succeq c_{a,M}\,t^{(a-2)/2}I$.
\end{proof}

\begin{lemma}\label{lem:remainder-hessian-small}
The remainder $r$ in the decomposition $f=\Phi_a+r$ satisfies
$$\|\nabla_x^2(P_t r)(x_t)\|=o\bigl(t^{(a-2)/2}\bigr)\qquad\text{as }t\to0^+.$$
\end{lemma}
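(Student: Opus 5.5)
The plan is to place both derivatives on the heat kernel, since $r$ has no regularity beyond the pointwise bound of Definition~\ref{ass:leading-profile}. Differentiation under the integral sign is justified by the polynomial growth of $r$ (Remark~\ref{rem:r-extension}) against Gaussian decay. This gives
$$\nabla_x^2(P_t r)(x_t)=\int_{\mathbb R^n}\nabla^2 G_t(x_t-y)\,r(y)\,dy.$$
A direct computation gives $\nabla^2G_t(z)=\bigl(\tfrac{zz^\top}{4t^2}-\tfrac{1}{2t}I\bigr)G_t(z)$, and hence the pointwise bound
$$\|\nabla^2G_t(z)\|\le \frac{C}{t}\Bigl(1+\frac{\|z\|_2^2}{t}\Bigr)G_t(z).$$
Throughout I use Theorem~\ref{thm:existence-localization}: there are $M,t_0$ with $\|x_t\|_2\le M\sqrt t$ for $0<t<t_0$.

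Fix $\varepsilon>0$ and choose $\delta\in(0,r_0)$ with $\sup_{\rho\le\delta}\omega(\rho)\le\varepsilon$. I then split $\mathbb R^n$ into three regions.

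\emph{The near region $\|y\|_2\le\delta$.} Here $|r(y)|\le\varepsilon\,\Phi_a(y)\le C\varepsilon\|y\|_2^a$. Substitute $y=x_t+\sqrt t\,z$ and use $\|y\|_2\le\sqrt t\,(M+\|z\|_2)$. The contribution is then bounded by
$$C\varepsilon\, t^{-1}\,t^{a/2}\int_{\mathbb R^n}(1+\|z\|_2^2)(M+\|z\|_2)^a\,\frac{e^{-\|z\|_2^2/4}}{(4\pi)^{n/2}}\,dz = C_M\,\varepsilon\, t^{(a-2)/2},$$
with $C_M$ independent of $\varepsilon$ and $t$.

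\emph{The intermediate region $\delta<\|y\|_2<r_0$.} Here $|r|\le C\Phi_a$ is bounded. For $t$ small we have $\|x_t\|_2\le\delta/2$, so $\|x_t-y\|_2\ge\delta/2$.

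\emph{The exterior $\|y\|_2\ge r_0$.} Here $|r(y)|\le C(1+\|y\|_2^m)$.

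In both of the last two regions, the same Gaussian-tail argument as in Lemma~\ref{lem:Ptf0-estimate} and Eq.~\eqref{eq:gaussian_tail} applies. The polynomial prefactors $t^{-1}(1+\|z\|_2^2)$ and $\|y\|_2^m$ are absorbed into half of the exponent. This gives a bound $C_\delta e^{-c\delta^2/t}$, which is $o(t^{(a-2)/2})$ because $t^{(a-2)/2}\ge1$ for $t\le1$.

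Combining the three regions,
$$\limsup_{t\to0^+} t^{(2-a)/2}\,\|\nabla_x^2(P_t r)(x_t)\|\le C_M\varepsilon .$$
Since $\varepsilon$ is arbitrary, the claim follows.

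The main obstacle is the near-region estimate. It requires that the $t^{-1}$ from the two kernel derivatives be compensated exactly by $t^{a/2}$ from the size of $r$ at the heat scale. It also requires that the smallness be extracted from $\omega$ uniformly, even though $x_t$ is not at the origin. This is why the localization $\|x_t\|_2=O(\sqrt t)$ is essential: it makes $\|y\|_2\lesssim\sqrt t(M+\|z\|_2)$ after rescaling, so that $C_M$ does not depend on $t$. The order of quantifiers ($\delta$ chosen from $\varepsilon$ first, then $t\to0$) is what turns the bound into $o(\cdot)$ rather than $O(\cdot)$.
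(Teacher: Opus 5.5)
Your proof is correct and takes essentially the same route as the paper. Both arguments split into a near region and a far region, and rescale at the heat scale so that the $t^{-1}$ from the two kernel derivatives is balanced by the $t^{a/2}$ from $\Phi_a$, giving the $t^{(a-2)/2}$ scale. Both use the localization $\|x_t\|_2=O(\sqrt t)$ to keep the constant uniform in $t$, and Gaussian tails together with the polynomial bound of Remark~\ref{rem:r-extension} for the far field. The only difference is how the smallness of $\omega$ is extracted: you choose the near radius $\delta$ from $\varepsilon$, whereas the paper fixes $\rho$ and applies dominated convergence to $\omega(\sqrt t\,\|z\|_2)\to0$. Your version has the minor advantage of not implicitly requiring $\omega$ to be bounded on $[0,\rho)$ for the domination.
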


\begin{proof}
Since $\|x_t\|_2=O(\sqrt t)$, there exist constants $M>0$ and $t_0>0$ such that $x_t=\sqrt t\,\xi_t$ and $\|\xi_t\|_2\le M$ for all $0<t<t_0$. Let $\rho\in(0,r_0)$ be fixed. For each pair $1\le i,j\le n$, write
$$\partial_{ij}(P_t r)(x_t)=\int_{\mathbb R^n}\partial_{ij}G_t(x_t-y)\,r(y)\,dy=:I_{ij}^{\mathrm{loc}}(t)+I_{ij}^{\mathrm{far}}(t),$$
where
$$I_{ij}^{\mathrm{loc}}(t):=\int_{B_\rho(0)}\partial_{ij}G_t(x_t-y)\,r(y)\,dy,\qquad I_{ij}^{\mathrm{far}}(t):=\int_{\mathbb R^n\setminus B_\rho(0)}\partial_{ij}G_t(x_t-y)\,r(y)\,dy.$$

We first estimate the local part $I_{ij}^{\mathrm{loc}}(t)$. The second derivative of the heat kernel satisfies
$$\partial_{ij}G_t(z)=t^{-1-n/2}K_{ij}\!\left(\frac{z}{\sqrt t}\right),$$
where
$$K_{ij}(z):=\frac{1}{(4\pi)^{n/2}}\left(\frac{z_iz_j}{4}-\frac{\delta_{ij}}{2}\right)e^{-\|z\|_2^2/4},$$
and $\delta_{ij}$ denotes the Kronecker delta. Since $|r(y)|\le \omega(\|y\|_2)\Phi_a(y)$ for $y\in B_\rho(0)$, the change of variables $y=\sqrt t\,z$ yields
$$I_{ij}^{\mathrm{loc}}(t)=t^{(a-2)/2}\int_{B_{\rho/\sqrt t}(0)}K_{ij}(\xi_t-z)\,\omega(\sqrt t\,\|z\|_2)\,\Phi_a(z)\,dz.$$
Since $\|\xi_t\|_2\le M$, the family $|K_{ij}(\xi_t-z)|\Phi_a(z)$ is dominated by the envelope 
$$\sup_{\|\xi\|_2\le M}|K_{ij}(\xi-z)|\Phi_a(z)$$
Moreover, $\omega(\sqrt t\,\|z\|_2)\to0$ as $t \to 0^+$ for each fixed $z\in\mathbb R^n$. Hence, by the dominated convergence theorem,
$$I_{ij}^{\mathrm{loc}}(t)=o\bigl(t^{(a-2)/2}\bigr)\qquad\text{as }t\to0^+.$$

Next we estimate the far-field part. Since $x_t\to0$, after possibly shrinking $t_0$ we may assume that $\|x_t\|_2\le \rho/2$ for all $0<t<t_0$. Thus, for $\|y\|_2\ge \rho$, one has $\|x_t-y\|_2\ge \rho/2$. Using the explicit formula for $\partial_{ij}G_t$, there exist constants $C,c>0$ such that
$$|\partial_{ij}G_t(x_t-y)|\le C\,t^{-1-n/2}\bigl(1+t^{-1}\|x_t-y\|_2^2\bigr)e^{-c\|x_t-y\|_2^2/t}$$
for all $\|y\|_2\ge \rho$ and $0<t<t_0$. Since $|r(y)|\le C(1+\|y\|_2^m)$ for some $m\ge2$, it follows that
$$|I_{ij}^{\mathrm{far}}(t)|\le C\int_{\mathbb R^n\setminus B_\rho(0)} t^{-1-n/2}\bigl(1+t^{-1}\|x_t-y\|_2^2\bigr)e^{-c\|x_t-y\|_2^2/t}(1+\|y\|_2^m)\,dy.$$
Now set $y=x_t+\sqrt t\,z$. Since $\|x_t\|_2\le \rho/2$, the condition $\|y\|_2\ge \rho$ implies $\|z\|_2\ge \rho/(2\sqrt t)$. Hence
$$|I_{ij}^{\mathrm{far}}(t)|\le C t^{-1}\int_{\|z\|_2\ge \rho/(2\sqrt t)} (1+\|z\|_2^2)e^{-c\|z\|_2^2}\bigl(1+\|x_t+\sqrt t\,z\|_2^m\bigr)\,dz.$$
Using $\|x_t+\sqrt t\,z\|_2\le C\sqrt t(1+\|z\|_2)$ and absorbing the polynomial factor into the Gaussian tail, we obtain
$$|I_{ij}^{\mathrm{far}}(t)|\le C t^{-1}e^{-c/t}.$$
In particular,
$$I_{ij}^{\mathrm{far}}(t)=o\bigl(t^{(a-2)/2}\bigr)\qquad\text{as }t\to0^+,$$
since $t^{-1}e^{-c/t}=o(t^\beta)$ for every $\beta\in\mathbb R$.

Combining the local and far-field estimates, we conclude that
$$\partial_{ij}(P_t r)(x_t)=o\bigl(t^{(a-2)/2}\bigr)\qquad\text{for all }1\le i,j\le n.$$
Hence $\|\nabla_x^2(P_t r)(x_t)\|=o\bigl(t^{(a-2)/2}\bigr)$, as claimed.
\end{proof}

\begin{theorem}\label{thm:main-nondegeneracy}
Let $x_t$ be any family of global minimizers of $P_t f$. By Theorem~\ref{thm:existence-localization}, such a family satisfies $\|x_t\|_2=O(\sqrt t)$ as $t\to0^+$. Then there exists $t_0>0$ such that, for every $0<t<t_0$, the Hessian $\nabla_x^2(P_t f)(x_t)$ is positive definite. More precisely, the following hold:
\begin{itemize}
\item If $a=2$, then there exists $c_0>0$ such that $\nabla_x^2(P_t f)(x_t)\succeq c_0 I$ for all $0<t<t_0$.
\item If $1\le a<2$, then there exists $c_0>0$ such that $\nabla_x^2(P_t f)(x_t)\succeq c_0\,t^{(a-2)/2}I$ for all $0<t<t_0$.
\end{itemize}
\end{theorem}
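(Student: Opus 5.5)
The plan is to split the regularized Hessian along the decomposition $f=\Phi_a+r$ of Remark~\ref{rem:r-extension}. Since $P_t$ is linear and the extended remainder has polynomial growth, differentiation under the integral is justified, and
$$\nabla_x^2(P_t f)(x_t)=\nabla_x^2(P_t\Phi_a)(x_t)+\nabla_x^2(P_t r)(x_t)\qquad\text{for all }t>0.$$
The first term is bounded below by Lemma~\ref{lem:model-hessian-lower-bound}, and the second is a small perturbation by Lemma~\ref{lem:remainder-hessian-small}. Both lemmas need only $\|x_t\|_2=O(\sqrt t)$ for the given family of global minimizers, and this is exactly what Theorem~\ref{thm:existence-localization} provides. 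So both lemmas apply on a common interval $(0,t_1)$.

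The key step is a Weyl-type eigenvalue perturbation. For symmetric matrices $A,B$ we have $\lambda_{\min}(A+B)\ge\lambda_{\min}(A)-\|B\|$, where $\|\cdot\|$ is the operator norm. Lemma~\ref{lem:remainder-hessian-small} is stated entrywise, but in fixed dimension $n$ this gives the operator-norm bound $\|\nabla_x^2(P_t r)(x_t)\|=o(t^{(a-2)/2})$ directly.

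We then treat the two regimes separately.
\begin{itemize}
\item If $a=2$, the first term equals $2I$ and $t^{(a-2)/2}=1$. Choose $t_0\le t_1$ so that $\|\nabla_x^2(P_t r)(x_t)\|\le 1$ for $0<t<t_0$. This gives $\nabla_x^2(P_tf)(x_t)\succeq I$, so $c_0=1$ works.
\item If $1\le a<2$, Lemma~\ref{lem:model-hessian-lower-bound} gives a constant $c_1>0$ with $\nabla_x^2(P_t\Phi_a)(x_t)\succeq c_1t^{(a-2)/2}I$. The little-$o$ bound lets us choose $t_0$ so that $\|\nabla_x^2(P_t r)(x_t)\|\le \tfrac{c_1}{2}t^{(a-2)/2}$. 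This yields $\nabla_x^2(P_tf)(x_t)\succeq \tfrac{c_1}{2}t^{(a-2)/2}I$, so $c_0=c_1/2$ works.
\end{itemize}
In both cases the lower bound is strictly positive, so the Hessian is positive definite on $(0,t_0)$.

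The main obstacle is uniformity over the family. The constants $M$ and $t_0$ in both lemmas depend on the family only through the bound $\|x_t\|_2\le M\sqrt t$. That bound comes from Theorem~\ref{thm:existence-localization} with a constant independent of which minimizer is selected, since it rests only on $P_tf(x_t)\le P_tf(0)$. The delicate point is the dominated-convergence step in Lemma~\ref{lem:remainder-hessian-small}. It needs the envelope $\sup_{\|\xi\|_2\le M}|K_{ij}(\xi-z)|\Phi_a(z)$ to be integrable, which holds because of the Gaussian factor. It also needs $\omega$ to be bounded on $[0,\rho]$, which I would obtain by shrinking $\rho$ if necessary. With the lemmas granted, this bookkeeping is the only thing left to check, and the theorem follows.
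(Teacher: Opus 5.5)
Your proposal is correct and follows the paper's proof essentially verbatim. You split $\nabla_x^2(P_tf)(x_t)$ into its $\Phi_a$ and $r$ parts, bound the first from below by Lemma~\ref{lem:model-hessian-lower-bound}, and absorb the $o(t^{(a-2)/2})$ remainder from Lemma~\ref{lem:remainder-hessian-small}, getting $c_0=1$ for $a=2$ and $c_0=c_1/2$ for $1\le a<2$, exactly as the paper does. Your remarks on uniformity over the choice of minimizer and on the boundedness of $\omega$ near $0$ are sound extra bookkeeping that the paper leaves implicit.
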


\begin{proof}
Using the decomposition $f=\Phi_a+r$ near the origin together with the global definition of the remainder term, we write
$$\nabla_x^2(P_t f)(x_t)=\nabla_x^2(P_t\Phi_a)(x_t)+\nabla_x^2(P_t r)(x_t).$$

If $a=2$, then Lemma~\ref{lem:model-hessian-lower-bound} gives $\nabla_x^2(P_t\Phi_2)(x_t)=2I$, while Lemma~\ref{lem:remainder-hessian-small} yields $\|\nabla_x^2(P_t r)(x_t)\|=o(1)$ as $t\to0^+$. Therefore, for all sufficiently small $t>0$,
$$\nabla_x^2(P_t f)(x_t)\succeq I.$$
This proves the claim in the quadratic case.

Now assume $1\le a<2$. By Lemma~\ref{lem:model-hessian-lower-bound}, there exist constants $c_1>0$ and $t_1>0$ such that
$$\nabla_x^2(P_t\Phi_a)(x_t)\succeq c_1\,t^{(a-2)/2}I,\qquad \forall 0<t<t_1.$$
On the other hand, Lemma~\ref{lem:remainder-hessian-small} implies that
$$\|\nabla_x^2(P_t r)(x_t)\|=o\bigl(t^{(a-2)/2}\bigr)\qquad\text{as }t\to0^+.$$
Hence there exists $t_2>0$ such that
$$\|\nabla_x^2(P_t r)(x_t)\|\le \frac{c_1}{2}\,t^{(a-2)/2},\qquad \forall 0<t<t_2.$$
Therefore, for all $0<t<\min\{t_1,t_2\}$,
$$\nabla_x^2(P_t f)(x_t)\succeq \frac{c_1}{2}\,t^{(a-2)/2}I.$$
This proves the claim in the subquadratic case.
\end{proof}

\subsection{Local solvability of the continuation equation}
Recall Definitions~\ref{def:critical-branch} and \ref{def:nondegenerate-branch}. Fix a family $\{x_t\}_{0<t<t_0}$ of global minimizers from Theorem~\ref{thm:main-nondegeneracy}. Since $\nabla_x^2(P_t f)(x_t)\succ 0$ for all sufficiently small $t>0$, the continuation equation is locally well posed near each point $(t_*,x_{t_*})$ on the selected minimizing family.

\begin{proposition}\label{prop:local-branch}
There exists $t_0>0$ such that the following holds. For every $t_*\in(0,t_0)$, there exist $\varepsilon>0$ and a unique $C^1$ map $y:(t_*-\varepsilon,t_*+\varepsilon)\to\mathbb R^n$ such that
$$
y(t_*)=x_{t_*}
\qquad\text{and}\qquad
\nabla_x(P_t f)(y(t))=0
\qquad\text{for all }t\in(t_*-\varepsilon,t_*+\varepsilon).
$$
After possibly shrinking $\varepsilon$, one also has
$$
\nabla_x^2(P_t f)(y(t))\succ 0
\qquad\text{for all }t\in(t_*-\varepsilon,t_*+\varepsilon).
$$
In particular, $y$ is a nondegenerate local minimizing branch on $(t_*-\varepsilon,t_*+\varepsilon)$ in the sense of Definition~\ref{def:nondegenerate-branch}.
\end{proposition}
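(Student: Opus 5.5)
The plan is to apply the implicit function theorem to $F(t,x):=\nabla_x(P_t f)(x)$ on the open set $(0,\infty)\times\mathbb R^n$. We take $t_0$ to be the constant from Theorem~\ref{thm:main-nondegeneracy}. Then for every $t_*\in(0,t_0)$ the global minimizer $x_{t_*}$ satisfies $F(t_*,x_{t_*})=0$ by the Corollary following Lemma~\ref{lem:gaussian-lower-bound}, and $\partial_x F(t_*,x_{t_*})=\nabla_x^2(P_{t_*}f)(x_{t_*})\succ0$, which is invertible.

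The first and main step is to verify that $F$ is $C^1$ jointly in $(t,x)$ on $(0,\infty)\times\mathbb R^n$. I would write
\[
F(t,x)=\int_{\mathbb R^n}\nabla G_t(x-y)\,f(y)\,dy
\]
and differentiate under the integral sign in both $t$ and $x$. The derivatives $\partial_t\nabla G_t$ and $\nabla^2G_t$ are polynomials in $(x-y)/\sqrt t$ times $t^{-k}G_t(x-y)$. On a compact set $[t_1,t_2]\times \overline{B_R}$ with $t_1>0$, these are dominated by $C(1+\|y\|_2^{p})e^{-\|y\|_2^2/(8t_2)}$ for $\|y\|_2$ large. The polynomial tail control (Definition~\ref{ass:tail-control}), together with local integrability of $f$ (from $0\le f\le \tfrac32\Phi_a$ near $0$ and the polynomial bound globally), makes this dominating function integrable. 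Dominated convergence then gives continuity of $F$, $\partial_tF$ and $\partial_xF$. In fact $P_tf$ is $C^\infty$ on $t>0$ and solves the heat equation, so $\partial_t F=\nabla_x\Delta(P_tf)$. This is the only technical point, and the growth assumptions make it routine.

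With this in hand, the implicit function theorem yields $\varepsilon>0$, a neighborhood $U$ of $x_{t_*}$, and a unique $C^1$ map $y:(t_*-\varepsilon,t_*+\varepsilon)\to U$ such that $y(t_*)=x_{t_*}$ and $F(t,y(t))=0$. Uniqueness is understood among maps with values in $U$, or equivalently among continuous branches through $x_{t_*}$, since any continuous solution stays in $U$ for $t$ near $t_*$. We shrink $\varepsilon$ so that $(t_*-\varepsilon,t_*+\varepsilon)\subset(0,\infty)$.

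For positivity, the map $t\mapsto \nabla_x^2(P_tf)(y(t))$ is continuous because $\partial_xF$ is continuous and $y$ is continuous. The smallest eigenvalue depends continuously on the matrix (Weyl's inequality) and is positive at $t_*$, so after shrinking $\varepsilon$ it stays positive on the whole interval. Hence $y$ is a critical-point branch in the sense of Definition~\ref{def:critical-branch} that is nondegenerate in the sense of Definition~\ref{def:nondegenerate-branch}. Differentiating $F(t,y(t))=0$ recovers \eqref{eq:continuation-raw}, so $y$ solves \eqref{eq:continuation-inverse}. Each $y(t)$ is a strict local minimizer of $P_tf$, since its gradient vanishes and its Hessian is positive definite.
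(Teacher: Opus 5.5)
Your proposal is correct and follows the paper's proof. You apply the implicit function theorem to $F(t,x)=\nabla_x(P_tf)(x)$ at $(t_*,x_{t_*})$, use Theorem~\ref{thm:main-nondegeneracy} for the invertibility of $\nabla_x^2(P_{t_*}f)(x_{t_*})$, and use continuity of the Hessian along $y$ to keep it positive definite after shrinking $\varepsilon$. You also check the joint $C^1$ regularity of $F$ by dominated convergence and make explicit that uniqueness holds within the neighborhood $U$ (equivalently, among continuous branches through $x_{t_*}$); the paper leaves both of these implicit.
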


\begin{proof}
Set $F(t,x):=\nabla_x(P_t f)(x)$. By Theorem~\ref{thm:main-nondegeneracy}, there exists $t_0>0$ such that
$$
\nabla_x^2(P_{t_*}f)(x_{t_*})\succ 0
\qquad\text{for every }t_*\in(0,t_0).
$$
Hence
$$
F(t_*,x_{t_*})=0
\qquad\text{and}\qquad
D_xF(t_*,x_{t_*})=\nabla_x^2(P_{t_*}f)(x_{t_*})
$$
is invertible. The implicit function theorem therefore yields $\varepsilon>0$ and a unique $C^1$ map $y$ with $y(t_*)=x_{t_*}$ and
$$
F(t,y(t))=0
\qquad\text{for all }t\in(t_*-\varepsilon,t_*+\varepsilon).
$$
That is,
$$
\nabla_x(P_t f)(y(t))=0
\qquad\text{for all }t\in(t_*-\varepsilon,t_*+\varepsilon).
$$
The positivity of $\nabla_x^2(P_t f)(y(t))$ then follows from continuity after shrinking $\varepsilon$ if necessary.
\end{proof}

\begin{corollary}\label{cor:continuation-equation}
Let $y(t)$ be the local branch from Proposition~\ref{prop:local-branch}. Then
\begin{equation}\label{eq:continuation-local}
\partial_t\nabla_x(P_t f)(y(t))+\nabla_x^2(P_t f)(y(t))\,\dot y(t)=0
\qquad\text{for all }t\in(t_*-\varepsilon,t_*+\varepsilon).
\end{equation}
Equivalently,
\begin{equation}\label{eq:continuation-local-inverse}
\dot y(t)=-\bigl(\nabla_x^2(P_t f)(y(t))\bigr)^{-1}\partial_t\nabla_x(P_t f)(y(t))
\qquad\text{for all }t\in(t_*-\varepsilon,t_*+\varepsilon).
\end{equation}
\end{corollary}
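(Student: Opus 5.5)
The plan is to differentiate the identity $F(t,y(t))=0$, where $F(t,x):=\nabla_x(P_t f)(x)$ as in the proof of Proposition~\ref{prop:local-branch}, with respect to $t$ via the chain rule, and then invert the Hessian using the positive definiteness supplied by that proposition. The only genuine analytic input is that $F$ is $C^1$ jointly in $(t,x)$ on $(0,\infty)\times\mathbb R^n$. This is the step I expect to need care, since the implicit function theorem invoked in Proposition~\ref{prop:local-branch} already presupposes it.

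\textbf{Step 1 (joint smoothness of $P_t f$).} I will show that $u(t,x)=(P_t f)(x)$ is $C^\infty$ on $(0,\infty)\times\mathbb R^n$. Every derivative $\partial_t^k\partial_x^\alpha G_t(x-y)$ is a polynomial in $t^{-1}$, $(x-y)$ times $G_t(x-y)$. On a compact set $[t_1,t_2]\times \overline{B_R}$ with $t_1>0$, each such derivative is dominated by $C(1+\|y\|_2^{N})e^{-\|y\|_2^2/(8t_2)}$ for large $\|y\|_2$. Combined with the polynomial tail control of Definition~\ref{ass:tail-control} (and $f\ge0$ from Definition~\ref{ass:global-lower-barrier}), this gives an integrable majorant uniform on the compact set. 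Dominated convergence then justifies differentiating under the integral sign in Eq.~\eqref{eq:def-Pt} to all orders, so $\partial_t\nabla_x u$ and $\nabla_x^2 u$ exist and are continuous. It also follows that $\partial_t u=\Delta u$, although this is not needed here.

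\textbf{Step 2 (chain rule).} Proposition~\ref{prop:local-branch} gives $y\in C^1((t_*-\varepsilon,t_*+\varepsilon);\mathbb R^n)$ with $F(t,y(t))\equiv0$. By Step 1, $F$ is $C^1$, so $t\mapsto F(t,y(t))$ is differentiable with
$$0=\frac{d}{dt}F(t,y(t))=\partial_tF(t,y(t))+D_xF(t,y(t))\,\dot y(t).$$
Since $\partial_tF=\partial_t\nabla_x(P_tf)$ and $D_xF=\nabla_x^2(P_tf)$, this is Eq.~\eqref{eq:continuation-local}.

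\textbf{Step 3 (inversion).} After the shrinking of $\varepsilon$ in Proposition~\ref{prop:local-branch}, we have $\nabla_x^2(P_tf)(y(t))\succ0$ on the whole interval, so this matrix is invertible there. Multiplying Eq.~\eqref{eq:continuation-local} on the left by its inverse gives Eq.~\eqref{eq:continuation-local-inverse}. Conversely, multiplying Eq.~\eqref{eq:continuation-local-inverse} by the Hessian recovers Eq.~\eqref{eq:continuation-local}, which establishes the equivalence.
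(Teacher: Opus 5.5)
Your proposal is correct and follows the same route as the paper. Like the paper, you differentiate the identity $\nabla_x(P_t f)(y(t))=0$ in $t$ by the chain rule, then invert the Hessian using the positive definiteness supplied by Proposition~\ref{prop:local-branch}. Your Step 1 additionally proves the joint $C^1$ (indeed $C^\infty$) regularity of $(t,x)\mapsto (P_t f)(x)$, via dominated convergence with the polynomial tail bound and $f\ge 0$; the paper assumes this without proof.
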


\begin{proof}
Differentiate the identity $\nabla_x(P_t f)(y(t))=0$ with respect to $t$. This gives Equation~\eqref{eq:continuation-local}. Since $\nabla_x^2(P_t f)(y(t))\succ 0$ by Proposition~\ref{prop:local-branch}, the Hessian is invertible, and Equation~\eqref{eq:continuation-local-inverse} follows.
\end{proof}

\subsection{Branch energy and continuation obstructions}

Recall Definitions~\ref{def:critical-branch}, \ref{def:nondegenerate-branch}, and \ref{def:maximal-branch}. We begin with an energy identity along a critical-point branch.

\begin{lemma}\label{lem:branch-energy}
Let $I\subset(0,\infty)$ be an interval, and let $x:I\to\mathbb R^n$ be a critical-point branch on $I$ in the sense of Definition~\ref{def:critical-branch}. Define
$$
E(t):=(P_t f)(x_t)
$$
for $t\in I$. Then $E\in C^1(I)$ and
$$
E'(t)=\partial_t(P_t f)(x_t)=\Delta(P_t f)(x_t)
$$
for all $t\in I$.
\end{lemma}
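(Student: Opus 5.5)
The plan is to regard $u(t,x):=(P_t f)(x)$ as a jointly smooth function on $(0,\infty)\times\mathbb R^n$ that solves the heat equation $\partial_t u=\Delta u$. Then $E(t)=u(t,x_t)$ is a composition of a $C^1$ (indeed $C^\infty$) function with the $C^1$ curve $t\mapsto(t,x_t)$. The chain rule will give $E\in C^1(I)$ and
$$
E'(t)=\partial_t u(t,x_t)+\nabla_x u(t,x_t)\cdot\dot x_t .
$$
By Definition~\ref{def:critical-branch}, $\nabla_x u(t,x_t)=\nabla_x(P_t f)(x_t)=0$ for every $t\in I$, so the second term vanishes. The heat equation then converts the remaining term, $\partial_t u(t,x_t)$, into $\Delta(P_t f)(x_t)$.

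The only real work is justifying the joint smoothness of $u$ and the identity $\partial_t u=\Delta u$ on $(0,\infty)\times\mathbb R^n$ under our hypotheses. First, I would note that $G_t(z)$ is smooth in $(t,z)$ for $t>0$ and satisfies $\partial_t G_t=\Delta G_t$, which is a direct computation. Second, I would differentiate under the integral sign in Eq.~\eqref{eq:def-Pt}. For this I need, on each compact set $[t_1,t_2]\times \overline{B_R(0)}$ with $t_1>0$, an integrable majorant for $|\partial_t^k\partial_x^\alpha G_t(x-y)|\,|f(y)|$. Each such derivative is a polynomial in $(x-y)$ and $t^{-1}$ times $G_t(x-y)$, so it is bounded by $C(1+\|y\|_2)^{N}e^{-\|x-y\|_2^2/(4t_2)}$. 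For $\|y\|_2\ge 2R$ this is in turn bounded by $C(1+\|y\|_2)^N e^{-\|y\|_2^2/(16t_2)}$. Combining with $0\le f(y)\le C(1+\|y\|_2^m)$ from Definition~\ref{ass:global-lower-barrier} and Definition~\ref{ass:tail-control}, the majorant is integrable. Here $f\ge0$ together with the upper bound gives $|f|\le C(1+\|y\|_2^m)$; I also assume $f$ is measurable, as is implicit throughout the paper.

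With this in hand, $u\in C^\infty((0,\infty)\times\mathbb R^n)$ and
$$
\partial_t u(t,x)=\int_{\mathbb R^n}\partial_tG_t(x-y)f(y)\,dy=\int_{\mathbb R^n}\Delta_xG_t(x-y)f(y)\,dy=\Delta u(t,x).
$$
This domination step is the main obstacle, though it is routine and closely mirrors the Gaussian tail estimates already used in Lemma~\ref{lem:Ptf0-estimate} and Lemma~\ref{lem:remainder-hessian-small}. Once it is done, continuity of $E'$ follows because $t\mapsto \Delta u(t,x_t)$ is a composition of continuous maps, which completes the proof.
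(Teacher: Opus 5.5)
Your proposal is correct and follows the same route as the paper: you apply the chain rule to $t\mapsto u(t,x_t)$, drop the term $\nabla_x(P_t f)(x_t)\cdot\dot x_t$ because the gradient vanishes along a critical branch, and convert $\partial_t u$ to $\Delta u$ via the heat equation. The only difference is that you justify the joint smoothness of $u$ and the differentiation under the integral sign with a polynomial-times-Gaussian majorant on compact sets $[t_1,t_2]\times\overline{B_R(0)}$, whereas the paper simply asserts this smoothness.
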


\begin{proof}
Since $P_t f$ is smooth in $(t,x)$ for every $t>0$, the map $E(t)=(P_t f)(x_t)$ is $C^1$. By the chain rule,
$$
E'(t)=\partial_t(P_t f)(x_t)+\nabla_x(P_t f)(x_t)\cdot \dot x_t.
$$
Because $x_t$ is a critical-point branch, the second term vanishes, so
$$
E'(t)=\partial_t(P_t f)(x_t).
$$
Finally, $u(t,x):=(P_t f)(x)$ solves the heat equation $\partial_t u=\Delta_x u$, hence
$$
E'(t)=\Delta(P_t f)(x_t).
$$
\end{proof}

\begin{corollary}\label{cor:energy-crossing}
Let $x^{(i)},x^{(j)}:I\to\mathbb R^n$ be two critical-point branches on $I$, and define
$$
E_i(t):=(P_t f)(x_t^{(i)}),\qquad
E_j(t):=(P_t f)(x_t^{(j)}),\qquad
D_{ij}(t):=E_i(t)-E_j(t).
$$
Then
$$
D_{ij}'(t)=\Delta(P_t f)(x_t^{(i)})-\Delta(P_t f)(x_t^{(j)}).
$$
In particular, if for some $t_*\in I$,
$$
D_{ij}(t_*)=0
\qquad\text{and}\qquad
D_{ij}'(t_*)\neq 0,
$$
then $t_*$ is an isolated crossing time, and there exists $\varepsilon>0$ such that $D_{ij}(t)$ changes sign on $(t_*-\varepsilon,t_*+\varepsilon)$.
\end{corollary}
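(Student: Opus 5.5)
The derivative formula follows directly from Lemma~\ref{lem:branch-energy}, applied once to each branch. Since $x^{(i)}$ and $x^{(j)}$ are both critical-point branches on the same interval $I$, the lemma gives $E_i,E_j\in C^1(I)$ with
$$E_i'(t)=\Delta(P_t f)(x_t^{(i)}),\qquad E_j'(t)=\Delta(P_t f)(x_t^{(j)}).$$
Subtracting, we get $D_{ij}\in C^1(I)$ and the stated identity for $D_{ij}'$. No further input is needed here. The only thing to check is that the smoothness of $(t,x)\mapsto P_t f(x)$ for $t>0$, which the lemma already uses, holds on all of $I\subset(0,\infty)$.

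For the crossing statement, suppose $t_*\in I$ satisfies $D_{ij}(t_*)=0$ and $D_{ij}'(t_*)\neq0$; we may assume $D_{ij}'(t_*)>0$, since the other sign is symmetric. Write
$$D_{ij}(t)=D_{ij}(t_*)+D_{ij}'(t_*)(t-t_*)+o(|t-t_*|)=D_{ij}'(t_*)(t-t_*)+o(|t-t_*|).$$
Choose $\varepsilon>0$ (and, if $t_*$ is an endpoint of $I$, use the one-sided version) such that the $o$-term is at most $\tfrac12|D_{ij}'(t_*)||t-t_*|$ on $(t_*-\varepsilon,t_*+\varepsilon)\cap I$. Then $D_{ij}(t)>0$ for $t\in(t_*,t_*+\varepsilon)$ and $D_{ij}(t)<0$ for $t\in(t_*-\varepsilon,t_*)$. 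Hence $t_*$ is the only zero of $D_{ij}$ in that window, which makes it an isolated crossing time, and $D_{ij}$ changes sign there. An alternative is to use continuity of $D_{ij}'$, which keeps $D_{ij}'>0$ near $t_*$ and so makes $D_{ij}$ strictly monotone there; either argument is fine.

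There is no real obstacle in this proof. The only subtle point is making sure $D_{ij}$ is genuinely differentiable at $t_*$, which the $C^1$ conclusion of Lemma~\ref{lem:branch-energy} provides. One should also read ``$(t_*-\varepsilon,t_*+\varepsilon)$'' as intersected with $I$ when $t_*$ is not an interior point; if $I$ is open, as in the intended applications, $t_*$ is interior and no such adjustment is needed.
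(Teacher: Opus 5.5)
Your proposal is correct and follows the paper's proof: apply Lemma~\ref{lem:branch-energy} to each branch, subtract the two identities, and obtain the crossing claim from $D_{ij}'(t_*)\neq 0$ (the paper's ``one-dimensional transversality condition''), which you make explicit with a first-order expansion. Your remark that a genuine sign change requires $t_*$ to be an interior point of $I$ is a fair caveat that the paper leaves implicit.
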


\begin{proof}
Apply Lemma~\ref{lem:branch-energy} to each branch and subtract the two identities. The final assertion follows from the one-dimensional transversality condition.
\end{proof}

We now turn to continuation beyond the small-$t$ regime.

\begin{theorem}\label{thm:continuation-alternative}
Let $x:(0,T_{\max})\to\mathbb R^n$ be a nondegenerate local minimizing branch with $0<T_{\max}<\infty$, and assume that
$$
x_t\to \bar x
\qquad\text{as }t\to T_{\max}^-
$$
for some $\bar x\in\mathbb R^n$. Then the following are equivalent:
\begin{enumerate}
\item[(i)] The branch $x_t$ extends across $T_{\max}$ as a nondegenerate local minimizing branch.
\item[(ii)] The Hessian remains uniformly nondegenerate near $T_{\max}$, namely,
$$
\liminf_{t\to T_{\max}^-}\lambda_{\min}\bigl(\nabla_x^2(P_t f)(x_t)\bigr)>0.
$$
\end{enumerate}
In particular, if the branch is maximal, then
$$
\liminf_{t\to T_{\max}^-}\lambda_{\min}\bigl(\nabla_x^2(P_t f)(x_t)\bigr)=0.
$$
\end{theorem}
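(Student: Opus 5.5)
The proof splits into the two implications; the key tool is joint smoothness of $(t,x)\mapsto P_t f(x)$ on $(0,\infty)\times\mathbb R^n$. This holds because $u=P_tf$ solves the heat equation, and the polynomial tail control (Definition~\ref{ass:tail-control}) lets us differentiate under the integral locally uniformly near any $t>0$.

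\emph{(i)$\Rightarrow$(ii).} Suppose $\widetilde x:(0,T_{\max}+\varepsilon)\to\mathbb R^n$ is a nondegenerate local minimizing branch extending $x$. Then $\widetilde x$ is continuous at $T_{\max}$, so $\widetilde x_{T_{\max}}=\lim_{t\to T_{\max}^-}x_t=\bar x$. Set $H(t):=\nabla_x^2(P_t f)(\widetilde x_t)$. This is continuous in $t$ on $(0,T_{\max}+\varepsilon)$, being a composition of the continuous Hessian map with a $C^1$ curve. Since $\lambda_{\min}$ is continuous (Weyl's inequality),
$$
\lim_{t\to T_{\max}^-}\lambda_{\min}\bigl(\nabla_x^2(P_tf)(x_t)\bigr)=\lambda_{\min}\bigl(H(T_{\max})\bigr).
$$
The right-hand side is strictly positive because $H(T_{\max})\succ0$ by Definition~\ref{def:nondegenerate-branch}. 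This gives (ii).

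\emph{(ii)$\Rightarrow$(i).} First pass to the limit at $T_{\max}$. By joint continuity of $\nabla_x P_tf$ and $\nabla_x^2P_tf$ at $(T_{\max},\bar x)$, we get $F(T_{\max},\bar x)=0$, where $F(t,x):=\nabla_x(P_tf)(x)$ as in Proposition~\ref{prop:local-branch}. Also $\nabla_x^2(P_{T_{\max}}f)(\bar x)=\lim_{t\to T_{\max}^-}\nabla_x^2(P_tf)(x_t)$. Its smallest eigenvalue equals the limit of the $\lambda_{\min}$'s, which by (ii) is at least the positive $\liminf$. Hence $D_xF(T_{\max},\bar x)\succ0$.

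Next, apply the implicit function theorem at $(T_{\max},\bar x)$, exactly as in Proposition~\ref{prop:local-branch}. This yields $\delta>0$, a neighborhood $U$ of $\bar x$, and a unique $C^1$ map $z:(T_{\max}-\delta,T_{\max}+\delta)\to U$ with $F(t,z(t))=0$, such that every zero of $F(t,\cdot)$ in $U$ for $t$ in that interval equals $z(t)$. After shrinking $\delta$, continuity gives $\nabla_x^2(P_tf)(z(t))\succ0$. Since $x_t\to\bar x$, we have $x_t\in U$ for $t$ close to $T_{\max}$, and uniqueness forces $x_t=z(t)$ on some interval $(T_{\max}-\delta',T_{\max})$.

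Now define $\widetilde x:=x$ on $(0,T_{\max})$ and $\widetilde x:=z$ on $[T_{\max},T_{\max}+\delta)$. The two pieces agree on an open overlap, so $\widetilde x$ is $C^1$, critical, and has positive definite Hessian throughout. This is the desired extension.

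\emph{The maximal case.} If $x$ is maximal, then (i) fails by Definition~\ref{def:maximal-branch}. By the equivalence, (ii) fails too. Since $\lambda_{\min}>0$ along the branch, the $\liminf$ is nonnegative, so it must equal $0$.

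The main obstacle is the uniqueness step: showing that the old branch coincides with the implicit-function branch near $T_{\max}$. This is exactly where the hypothesis $x_t\to\bar x$ is needed, since it places $x_t$ in the uniqueness neighborhood $U$. A secondary technical point is justifying the joint $C^2$ regularity of $P_tf$ in $(t,x)$ up to and across $t=T_{\max}>0$. I would handle this by noting that differentiating $G_t(x-y)$ in $t$ or $x$ produces polynomial factors times Gaussians, uniformly for $t$ in compact subsets of $(0,\infty)$. These are integrable against $1+\|y\|_2^m$, so dominated convergence applies.
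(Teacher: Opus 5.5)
Your proposal is correct and follows the paper's proof essentially step for step. For (i)$\Rightarrow$(ii) you use continuity of the Hessian along the extended branch; for (ii)$\Rightarrow$(i) you apply the implicit function theorem at $(T_{\max},\bar x)$ and use $x_t\to\bar x$ to place $x_t$ in the uniqueness neighborhood; and the maximal case uses the same sign argument. Your dominated-convergence justification of joint smoothness of $(t,x)\mapsto P_tf(x)$, which is what makes the implicit branch $C^1$, is a useful detail that the paper only asserts.
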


\begin{proof}
Assume first that (ii) holds. Then there exist $c_0>0$ and $t_1<T_{\max}$ such that
$$
\lambda_{\min}\bigl(\nabla_x^2(P_t f)(x_t)\bigr)\ge c_0
\qquad\text{for all }t\in(t_1,T_{\max}).
$$
Set
$$
F(t,x):=\nabla_x(P_t f)(x).
$$
Since $P_t f$ is smooth in $(t,x)$ for $t>0$, the map $F$ is continuous in $t$ and $C^1$ in $x$ near $(T_{\max},\bar x)$. Because $x_t\to\bar x$ and $F(t,x_t)=0$ for all $t<T_{\max}$, continuity gives
$$
F(T_{\max},\bar x)=0.
$$
Moreover, by continuity of the Hessian and the lower bound along the branch,
$$
\nabla_xF(T_{\max},\bar x)=\nabla_x^2(P_{T_{\max}}f)(\bar x)\succ 0.
$$
Hence $\nabla_xF(T_{\max},\bar x)$ is invertible. By the implicit function theorem, there exist $\varepsilon>0$, a neighborhood $U$ of $\bar x$, and a unique $C^1$ map
$$
y:(T_{\max}-\varepsilon,T_{\max}+\varepsilon)\to U
$$
such that
$$
F(t,y(t))=0
\qquad\text{and}\qquad
y(T_{\max})=\bar x.
$$
Since $x_t\to\bar x$, we have $x_t\in U$ for all $t$ sufficiently close to $T_{\max}$, and by uniqueness in the implicit function theorem,
$$
x_t=y(t)
\qquad\text{for all }t\in(T_{\max}-\varepsilon,T_{\max}).
$$
Shrinking $\varepsilon$ if necessary, continuity of the Hessian implies
$$
\nabla_x^2(P_t f)(y(t))\succ 0
\qquad\text{for all }t\in(T_{\max}-\varepsilon,T_{\max}+\varepsilon).
$$
Thus $y$ extends the original branch across $T_{\max}$ as a nondegenerate local minimizing branch. Therefore (ii)$\Rightarrow$(i).

Conversely, assume that (i) holds. Then there exist $\varepsilon>0$ and a nondegenerate local minimizing branch
$$
\widetilde x:(T_{\max}-\varepsilon,T_{\max}+\varepsilon)\to\mathbb R^n
$$
such that
$$
\widetilde x_t=x_t
\qquad\text{for all }t\in(T_{\max}-\varepsilon,T_{\max}).
$$
Since
$$
\nabla_x^2(P_t f)(\widetilde x_t)\succ 0
\qquad\text{for all }t\in(T_{\max}-\varepsilon,T_{\max}+\varepsilon),
$$
continuity of the Hessian implies that there exist $c_0>0$ and $\delta>0$ such that
$$
\lambda_{\min}\bigl(\nabla_x^2(P_t f)(\widetilde x_t)\bigr)\ge c_0
\qquad\text{for all }t\in(T_{\max}-\delta,T_{\max}+\delta).
$$
Hence
$$
\liminf_{t\to T_{\max}^-}\lambda_{\min}\bigl(\nabla_x^2(P_t f)(x_t)\bigr)\ge c_0>0.
$$
Thus (i)$\Rightarrow$(ii).

Finally, if the branch is maximal, then (i) fails. Therefore (ii) also fails. Since
$$
\lambda_{\min}\bigl(\nabla_x^2(P_t f)(x_t)\bigr)>0
\qquad\text{for all }t\in(0,T_{\max}),
$$
we necessarily have
$$
\liminf_{t\to T_{\max}^-}\lambda_{\min}\bigl(\nabla_x^2(P_t f)(x_t)\bigr)=0.
$$
\end{proof}

\section{Discussion}
In this work, we studied nonsmooth and globally nonconvex minima through the family of smooth objectives generated by heat-kernel regularization. Rather than analyzing the original singular landscape directly, we examined how minimizers of the regularized objectives behave as the smoothing scale $t$ tends to zero, and used this branch-wise viewpoint to recover the local geometry of the original minimum.

A central message of our analysis is that heat-kernel regularization is not merely a smoothing device for numerical convenience. It also reveals the local leading-order structure of a nonsmooth minimum through the asymptotic behavior of the regularized Hessian. In the quadratic regime ($a=2$), the Hessian remains at a finite positive scale along the minimizing branch, whereas in the subquadratic cusp regime ($1\le a<2$), it blows up at a controlled rate. In both cases, the regularized Hessian remains positive definite for sufficiently small $t>0$, so the continuation equation is not merely formal but locally well posed near the original minimizer. More precisely, every sufficiently small-scale minimizer lies on a unique local $C^1$ critical branch, and along any such branch the associated branch energy satisfies the identity
$\frac{d}{dt}(P_t f)(x_t)=\partial_t(P_t f)(x_t)=\Delta(P_t f)(x_t)$. Thus heat-kernel regularization restores not only curvature, but also a local continuation structure and an intrinsic energy law along the branch.

From this perspective, the main contribution of the present work is not algorithmic in the usual sense of nonsmooth optimization, such as complexity estimates or step-size design, but analytical. Rather than working directly with generalized gradients or stationarity notions for the original objective, our approach passes to a smooth one-parameter family and studies its minimizing branches. This yields a rigorous second-order framework for continuation-based analysis, identifies branch energy as a natural quantity for comparing competing local branches, and shows that under a finite-limit hypothesis a nondegenerate local minimizing branch can fail to extend past a finite terminal scale only through loss of uniform Hessian nondegeneracy. In this sense, heat-kernel regularization serves as a theoretical bridge from singular nonsmooth minima to traceable smooth branches, while also clarifying the structural mechanisms that govern local continuation and its possible breakdown.

Several directions remain open. A first issue is global path construction. Our results identify and control the branch in the small-\(t\) regime, but they do not guarantee the existence of a single globally smooth minimizing path over the full range of smoothing scales. In nonconvex settings, competing branches may coexist and exchange global optimality, as illustrated in Example~\ref{ex:discontinuous_branch}. It would therefore be interesting to develop a global branch-selection theory, possibly allowing piecewise-smooth continuation paths with switching times, and to characterize structural conditions under which a canonical global path exists.

A second direction is the extension to more general local profiles. In this paper, the leading-order behavior is modeled by the isotropic profile $\Phi_a(x)=\sum_{i=1}^n |x_i|^a$ with $a\in[1,2]$. Allowing anisotropic exponents, mixed terms, or more general homogeneous profiles would lead to direction-dependent curvature scales and may require a finer spectral analysis of the regularized Hessian. Moreover, the highly singular regime $a<1$ lies outside the scope of the present argument, since the loss of convexity and the stronger singularity invalidate the current estimates. Understanding that regime may require more explicit heat-kernel representations and different tools for handling the resulting singular structure.

\section{Conclusions}\label{sec:conclusions}
We analyzed heat-kernel regularization near nonsmooth minimizers and showed that the regularized minimizers localize at the natural heat scale $O(\sqrt t)$. We further proved that the regularized Hessian remains positive definite along the minimizing branch for all sufficiently small $t>0$, with asymptotic behavior determined by the local leading-order exponent $a$: a finite positive limit in the quadratic case $a=2$, and a controlled blow-up in the subquadratic case $1\le a<2$. In particular, every sufficiently small-scale minimizer lies on a unique local $C^1$ critical branch, so that the continuation equation is rigorously valid near the original minimizer rather than merely formal. We also identified an intrinsic branch-energy identity and showed, under a finite-limit hypothesis, that continuation past a terminal scale is obstructed only by loss of uniform Hessian nondegeneracy. These results provide a rigorous continuation-based second-order framework for studying nonsmooth minima through smooth regularized objectives.
\bibliographystyle{unsrt}  
\bibliography{references}

@article{garmanjani2013smoothing,
  title={Smoothing and worst-case complexity for direct-search methods in nonsmooth optimization},
  author={Garmanjani, Roholla and Vicente, Lu{\'\i}s Nunes},
  journal={IMA Journal of Numerical Analysis},
  volume={33},
  number={3},
  pages={1008--1028},
  year={2013},
  publisher={OUP}
}

@article{wang2022differentially,
  title={Differentially private SGD with non-smooth losses},
  author={Wang, Puyu and Lei, Yunwen and Ying, Yiming and Zhang, Hai},
  journal={Applied and Computational Harmonic Analysis},
  volume={56},
  pages={306--336},
  year={2022},
  publisher={Elsevier}
}

@article{nesterov2017random,
  title={Random gradient-free minimization of convex functions},
  author={Nesterov, Yurii and Spokoiny, Vladimir},
  journal={Foundations of Computational Mathematics},
  volume={17},
  number={2},
  pages={527--566},
  year={2017},
  publisher={Springer}
}

@article{iwakiri2022single,
  title={Single loop gaussian homotopy method for non-convex optimization},
  author={Iwakiri, Hidenori and Wang, Yuhang and Ito, Shinji and Takeda, Akiko},
  journal={Advances in Neural Information Processing Systems},
  volume={35},
  pages={7065--7076},
  year={2022}
}

@inproceedings{xu2025global,
  title={Global Optimization with a Power-Transformed Objective and Gaussian Smoothing},
  author={Xu, Chen},
  booktitle={International Conference on Machine Learning},
  pages={69189--69216},
  year={2025},
  organization={PMLR}
}

@article{allgower1993continuation,
  title={Continuation and path following},
  author={Allgower, Eugene L and Georg, Kurt},
  journal={Acta numerica},
  volume={2},
  pages={1--64},
  year={1993},
  publisher={Cambridge University Press}
}

@article{hao2022adaptive,
  title={An adaptive homotopy tracking algorithm for solving nonlinear parametric systems with applications in nonlinear ODEs},
  author={Hao, Wenrui},
  journal={Applied Mathematics Letters},
  volume={125},
  pages={107767},
  year={2022},
  publisher={Elsevier}
}

@article{seguin2022continuation,
  title={Continuation methods for Riemannian optimization},
  author={S{\'e}guin, Axel and Kressner, Daniel},
  journal={SIAM Journal on Optimization},
  volume={32},
  number={2},
  pages={1069--1093},
  year={2022},
  publisher={SIAM}
}

@article{starnes2023gaussian,
  title={Gaussian smoothing gradient descent for minimizing functions (gsmoothgd)},
  author={Starnes, Andrew and Dereventsov, Anton and Webster, Clayton},
  journal={arXiv preprint arXiv:2311.00521},
  year={2023}
}

@inproceedings{mobahi2015theoretical,
  title={A theoretical analysis of optimization by gaussian continuation},
  author={Mobahi, Hossein and Fisher III, John},
  booktitle={Proceedings of the AAAI Conference on Artificial Intelligence},
  volume={29},
  number={1},
  year={2015}
}

@article{gu2026deep,
  title={Deep Predictor-Corrector Networks for Robust Parameter Estimation in Non-autonomous System with Discontinuous Inputs},
  author={Gu, Gyeongwan and Hyun, Jinwoo and Jo, Hyeontae and Kim, Jae Kyoung},
  journal={arXiv preprint arXiv:2603.12965},
  year={2026}
}

@article{ilersich2025deep,
  title={Deep learning with Gaussian continuation},
  author={Ilersich, Andrew F and Nair, Prasanth B},
  journal={Foundations of Data Science},
  volume={7},
  number={3},
  pages={790--813},
  year={2025},
  publisher={Foundations of Data Science}
}

@book{stein2009real,
  title={Real analysis: measure theory, integration, and Hilbert spaces},
  author={Stein, Elias M and Shakarchi, Rami},
  year={2009},
  publisher={Princeton University Press}
}

@book{brezis2011functional,
  title={Functional analysis, Sobolev spaces and partial differential equations},
  author={Brezis, Haim and Br{\'e}zis, Haim},
  volume={2},
  number={3},
  year={2011},
  publisher={Springer}
}

@article{fuduli2004minimizing,
  title={Minimizing nonconvex nonsmooth functions via cutting planes and proximity control},
  author={Fuduli, Antonio and Gaudioso, Manlio and Giallombardo, Giovanni},
  journal={SIAM journal on optimization},
  volume={14},
  number={3},
  pages={743--756},
  year={2004},
  publisher={SIAM}
}

@article{nesterov2005smooth,
  title={Smooth minimization of non-smooth functions},
  author={Nesterov, Yu},
  journal={Mathematical programming},
  volume={103},
  number={1},
  pages={127--152},
  year={2005},
  publisher={Springer}
}

@article{kiwiel2007convergence,
  title={Convergence of the gradient sampling algorithm for nonsmooth nonconvex optimization},
  author={Kiwiel, Krzysztof C},
  journal={SIAM Journal on Optimization},
  volume={18},
  number={2},
  pages={379--388},
  year={2007},
  publisher={SIAM}
}

@article{ma2022beyond,
  title={Beyond the quadratic approximation: The multiscale structure of neural network loss landscapes},
  author={Ma, Chao and Kunin, Daniel and Wu, Lei and Ying, Lexing},
  journal={arXiv preprint arXiv:2204.11326},
  year={2022}
}

@article{jia2025first,
  title={First-order methods for nonsmooth nonconvex functional constrained optimization with or without slater points},
  author={Jia, Zhichao and Grimmer, Benjamin},
  journal={SIAM Journal on Optimization},
  volume={35},
  number={2},
  pages={1300--1329},
  year={2025},
  publisher={SIAM}
}

@article{jo2026neural,
  title={Neural Network--Based Parameter Estimation for Nonautonomous Differential Equations with Discontinuous Signals},
  author={Jo, Hyeontae and Josi{\'c}, Kre{\v{s}}imir and Kim, Jae Kyoung},
  journal={SIAM Journal on Applied Mathematics},
  volume={86},
  number={1},
  pages={327--347},
  year={2026},
  publisher={SIAM}
}

@article{ishige2026preservation,
  title={Preservation of F-convexity under the heat flow},
  author={Ishige, Kazuhiro and Petitt, Troy and Salani, Paolo},
  journal={arXiv preprint arXiv:2603.10920},
  year={2026}
}

@article{burke1993weak,
  title={Weak sharp minima in mathematical programming},
  author={Burke, James V and Ferris, Michael C},
  journal={SIAM Journal on Control and Optimization},
  volume={31},
  number={5},
  pages={1340--1359},
  year={1993},
  publisher={SIAM}
}

@book{bonnans2013perturbation,
  title={Perturbation analysis of optimization problems},
  author={Bonnans, J Frederic and Shapiro, Alexander},
  year={2013},
  publisher={Springer Science \& Business Media}
}

@article{drusvyatskiy2013tilt,
  title={Tilt stability, uniform quadratic growth, and strong metric regularity of the subdifferential},
  author={Drusvyatskiy, Dmitriy and Lewis, Adrian S},
  journal={SIAM Journal on Optimization},
  volume={23},
  number={1},
  pages={256--267},
  year={2013},
  publisher={SIAM}
}

@article{chieu2021quadratic,
  title={Quadratic growth and strong metric subregularity of the subdifferential via subgradient graphical derivative},
  author={Chieu, Nguyen Huy and Hien, Le Van and Nghia, Tran TA and Tuan, Ha Anh},
  journal={SIAM Journal on Optimization},
  volume={31},
  number={1},
  pages={545--568},
  year={2021},
  publisher={SIAM}
}

@article{davis2025local,
  title={A local nearly linearly convergent first-order method for nonsmooth functions with quadratic growth},
  author={Davis, Damek and Jiang, Liwei},
  journal={Foundations of Computational Mathematics},
  volume={25},
  number={3},
  pages={943--1024},
  year={2025},
  publisher={Springer}
}

@article{khanh2025local,
  title={Local Minimizers of Nonconvex Functions in Banach Spaces via Moreau Envelopes: PD Khanh et al.},
  author={Khanh, Pham Duy and Khoa, Vu VH and Mordukhovich, Boris S and Phat, Vo Thanh},
  journal={Vietnam Journal of Mathematics},
  volume={53},
  number={4},
  pages={803--813},
  year={2025},
  publisher={Springer}
}

@book{cannon1984one,
  title={The one-dimensional heat equation},
  author={Cannon, John Rozier},
  number={23},
  year={1984},
  publisher={Cambridge University Press}
}

@book{stein1970singular,
  title={Singular integrals and differentiability properties of functions},
  author={Stein, Elias M},
  number={30},
  year={1970},
  publisher={Princeton university press}
}

@inproceedings{seo2010heat,
  title={Heat kernel smoothing using Laplace-Beltrami eigenfunctions},
  author={Seo, Seongho and Chung, Moo K and Vorperian, Houri K},
  booktitle={International Conference on Medical Image Computing and Computer-Assisted Intervention},
  pages={505--512},
  year={2010},
  organization={Springer}
}

@book{lindeberg2013scale,
  title={Scale-space theory in computer vision},
  author={Lindeberg, Tony},
  volume={256},
  year={2013},
  publisher={Springer Science \& Business Media}
}

@article{yang2014mollification,
  title={A mollification regularization method for the inverse spatial-dependent heat source problem},
  author={Yang, Fan and Fu, Chu-Li},
  journal={Journal of computational and applied mathematics},
  volume={255},
  pages={555--567},
  year={2014},
  publisher={Elsevier}
}

@article{chung2019heat,
  title={Heat kernel smoothing in irregular domains},
  author={Chung, M and Wang, Yanli},
  journal={Institute for Mathematical Sciences, National University of Singapore},
  pages={181--210},
  year={2019},
  publisher={World Scientific}
}

@article{luo2024regularization,
  title={The regularization continuation method for optimization problems with nonlinear equality constraints},
  author={Luo, Xin-long and Xiao, Hang and Zhang, Sen},
  journal={Journal of Scientific Computing},
  volume={99},
  number={1},
  pages={17},
  year={2024},
  publisher={Springer}
}

\end{document}